\documentclass[12pt]{amsart}
\usepackage{hyperref}

\usepackage{ amssymb }

\numberwithin{equation}{section}

\usepackage{hyperref}

\usepackage{float}

\usepackage{pgf,tikz}

\usepackage{enumitem}

\newcommand \reg{\operatorname{reg}}

\newcommand\link{\operatorname{link}}

\newcommand \cf{\mathsf{CF}}

\newcommand \K{\mathbb{K}}

\newcommand \cl{\operatorname{Cl}}

\newcommand \CH{\mathcal{CH}}

\newcommand \ind{\operatorname{Ind}}

\newtheorem{theorem}{Theorem}[section]

\newtheorem{definition}[theorem]{Definition}
\newtheorem{cons}[theorem]{Construction}
\newtheorem{lemma}[theorem]{Lemma}
\newtheorem{proposition}[theorem]{Proposition}
\newtheorem{example}[theorem]{Example}

\newtheorem{remark}[theorem]{Remark}

\newtheorem{corollary}[theorem]{Corollary}

\newtheorem*{notation*}{Notation}

\begin{document}

\title[Shellability in Clique-Free Complexes]
{Shellability in Clique-Free Complexes of Graphs}

\author{Rakesh Ghosh}
\email{rakeshghosh1591@gmail.com}

\author{S~Selvaraja}
\address{Department of Mathematics, Indian Institute of Technology Bhubaneswar,
Bhubaneswar 752050, India}
\email{selvas@iitbbs.ac.in}

\thanks{2020 \emph{Mathematics Subject Classification}.
05E45, 13F55, 05E40.}

\keywords{Clique-free complex; shellability; vertex decomposability; chordal graphs;
$t$-clique ideals; edge ideals; linear resolution}

\begin{abstract}
We study combinatorial and algebraic properties of $t$-clique-free
complexes, a family of simplicial complexes associated with finite simple
graphs that generalize the classical independence complex. For a graph $G$ and
an integer $t \ge 2$, the $t$-clique-free complex $\cf_t(G)$ is the simplicial
complex on the vertex set of $G$ whose faces are the subsets inducing no
cliques of size $t$.

Our main results provide sufficient conditions for shellability and related
decomposability properties of $t$-clique-free complexes. In particular, we show
that if $G$ is a $t$-diamond-free chordal graph (in particular, a block graph), then $\cf_t(G)$ is
$(t-2)$-decomposable and hence shellable. We also investigate how graph
modifications via clique attachments influence shellability. Generalizing
earlier constructions involving whiskers and clique extensions, we introduce
the following operation: given a graph $H$, a subset $S \subseteq V(H)$, and an
integer $t \ge 2$, we form a graph $\cl(H,S,t)$ by attaching to each vertex in
$S$ a clique of size at least $t$. We prove that $\cf_t(H \setminus S)$ is
shellable if and only if $\cf_t(\cl(H,S,t))$ is shellable. This yields a
flexible method for constructing shellable complexes, particularly when $S$ is
a cycle cover. In addition, we extend the notion of clique whiskering and show
that for any graph admitting a clique vertex-partition, the resulting
$t$-clique whiskering produces a pure and shellable, and hence
Cohen-Macaulay, $t$-clique-free complex.

Finally, we establish a Fr\"oberg-type result linking chordality and linear
resolutions. We show that for any chordal graph $G$, the edge ideal of the
complement $t$-clique clutter $\overline{\CH_t(G)}$ admits a $t$-linear
resolution over any field.
\end{abstract}

\maketitle
\section{Introduction}

In this paper, we study a family of simplicial complexes arising from graphs that
are defined by forbidding cliques of a fixed size. These complexes, which we
refer to as clique-free complexes, arise naturally as the Stanley-Reisner
complexes of the well-established \emph{$t$-clique ideals} of a graph. More
precisely, for a graph $G = (V(G), E(G))$ and an integer $t \ge 2$, the
\textit{clique-free complex} $\cf_t(G)$ is the simplicial complex on $V(G)$ whose
faces are the subsets $F \subseteq V(G)$ that contain no $t$-clique. Equivalently,
$\cf_t(G)$ is the Stanley-Reisner complex of the $t$-clique ideal $I_t(G)$. When
$t = 2$, this construction recovers the classical \textit{independence complex}
$\ind(G)$, whose faces are the independent sets of $G$. Thus, the family
$\{\cf_t(G)\}_{t \ge 2}$ forms a natural filtration
\[
\cf_2(G) \subseteq \cf_3(G) \subseteq \cf_4(G) \subseteq \cdots .
\]

Clique-free complexes and their associated ideals, including $t$-clique ideals
and the dual notion of $t$-independence ideals, have been introduced and studied
in prior work in combinatorial commutative algebra and topological combinatorics
(see, for example, \cite{KAK26, HKMR20, KM19, Mor18}). In particular, properties
such as shellability, vertex decomposability, and Cohen-Macaulayness have been
investigated for specific graph families. The purpose of the present work is to
contribute further to this line of research by establishing new, general
sufficient conditions on the graph $G$ that guarantee the complex $\cf_t(G)$
exhibits these desirable combinatorial and algebraic properties.

Understanding when simplicial complexes associated with graphs satisfy
properties such as vertex decomposability, shellability, or (sequential)
Cohen-Macaulayness is a central theme in topological combinatorics. These
properties are related by the following well-known hierarchy:
\[
\begin{aligned}
&\text{vertex-decomposable} \implies \text{shellable} \implies
\text{sequentially Cohen-Macaulay}, \\
&\left.
    \begin{aligned}
        &\text{pure vertex-decomposable}, \\
        &\text{pure shellable}
    \end{aligned}
    \right\}
    \implies \text{Cohen-Macaulay}.
\end{aligned}
\]
Characterizing simplicial complexes that belong to these classes has been an
active area of research (see, for example,
\cite{crupi, fv, VanVilla, vill_cohen, Wood2, Russ11}).

Our investigation is motivated by a fundamental result of
Dochtermann-Engstr\"om~\cite[Theorem~4.1]{DochEng09} and
Woodroofe~\cite[Corollary~7]{Wood2}, who proved that the independence complex
$\ind(G)$ of any chordal graph $G$ is vertex decomposable, and hence shellable.
This result highlights the strong interplay between graph-theoretic structure
and the topological properties of associated simplicial complexes, and naturally
motivates the study of higher-order generalizations of independence complexes.
Previous studies have examined the complexes $\cf_t(G)$ for special classes of
graphs. For instance, Moradi~\cite{Mor18} studied purity and shellability
properties of the $t$-clique-free complex of the complement of a cycle,
$\cf_t(\overline{C_n})$, and investigated the Alexander dual of
$\cf_t(\overline{G})$, where $\overline{G}$ denotes the complement of the graph
$G$. Furthermore, Khosh-Ahang and Moradi~\cite{KM19} established vertex
decomposability of $\cf_t(\overline{G})$ for several well-known graph classes,
including path graphs, star graphs, double star graphs, trees of diameter less
than four, broom graphs, and double broom graphs.

In the present work, we move beyond these specific families and provide broader,
systematic sufficient conditions on the graph $G$ that ensure the complex
$\cf_t(G)$ is shellable. Our results unify and extend several earlier statements
and provide new tools for analyzing the combinatorial topology of clique-free
complexes for arbitrary graphs.

Our investigation also reveals that the situation for $t \ge 3$ is more subtle
than in the classical case $t = 2$. As demonstrated in
Example~\ref{not-shellable}, there exist chordal graphs $G$ for which $\cf_t(G)$
fails to be sequentially Cohen-Macaulay, and consequently is neither shellable
nor vertex decomposable. This observation motivates the following central
question.
\textit{Which natural subclasses of chordal graphs guarantee that $\cf_t(G)$ is
vertex decomposable, shellable, or sequentially Cohen-Macaulay for all
$t \ge 3$?}

To address this question, we employ the framework of \emph{$k$-decomposability}, a
hierarchical generalization of vertex decomposability. For a (not necessarily
pure) simplicial complex of dimension $k$, the following implications hold (see,
for example, \cite{J05}):
\[
\text{$0$-decomposable} \implies \text{$1$-decomposable} \implies \cdots \implies
\text{$k$-decomposable} \iff \text{shellable},
\]
where $0$-decomposable is equivalent to vertex decomposable. This concept,
originally introduced for pure complexes by Provan and Billera~\cite{ProvLouis}
and later extended to the non-pure setting by Bj\"orner-Wachs~\cite{BjWachs} and
Woodroofe~\cite{Russ11}, provides a graded and flexible tool for establishing
shellability.

Our main result provides a sufficient condition for shellability in terms of a
forbidden graph substructure.

\medskip
\noindent\textbf{Theorem~\ref{main}.}\emph{
Let $G$ be a $t$-diamond-free chordal graph. Then, for all $t \ge 3$, the
complex $\cf_t(G)$ is $(t-2)$-decomposable. In particular, $\cf_t(G)$ is shellable
for all $t \ge 3$.}

\medskip
Since block graphs form a subclass of $t$-diamond-free chordal graphs,
Theorem~\ref{main} applies directly to this class, guaranteeing the shellability
of $\cf_t(G)$ for all $t \ge 3$.

We now shift our focus to the following natural question: given an arbitrary
graph $G$, how can one modify $G$ so that the associated clique-free complex
$\cf_t(G)$ is vertex decomposable, shellable, or sequentially
Cohen-Macaulay for all $t \ge 2$? A foundational result of
Villarreal~\cite{vill_cohen} shows that, for any graph $G$, the independence
complex of its whiskered version $W(G)$ - formed by attaching a pendant vertex to
each vertex of $G$ - is Cohen-Macaulay. This was subsequently generalized by
Francisco and H\'a~\cite{FH}, who proved that if $G \setminus S$ is either chordal
or a $5$-cycle $C_5$, then the independence complex of $G \cup W(S)$, where only
the vertices in $S \subseteq V(G)$ are whiskered, is sequentially
Cohen-Macaulay. Further extending this line of research, Cook~II and
Nagel~\cite{CookNagel} introduced the notion of vertex clique-whiskered graphs
$G^{\pi}$ and showed that $\ind(G^{\pi})$ is both pure and vertex decomposable.

Motivated by these results, we investigate how attaching cliques of varying
sizes to a graph affects the shellability of the clique-free complex
$\cf_t(G)$. Our approach is based on the following construction: given a graph
$H$, a subset $S \subseteq V(H)$, and an integer $t \ge 2$, we form a new graph
$\cl(H,S,t)$ by attaching to each vertex $v \in S$ a new clique $K_v$ of size at
least $t$ containing $v$.

Our main result shows that this construction preserves shellability.

\vskip 1mm
\noindent
\textbf{Theorem~\ref{she-extension}.}\emph{
Let $G = \cl(H,S,t)$ with $t \ge 2$. Then the complex $\cf_t(H \setminus S)$ is
shellable if and only if $\cf_t(G)$ is shellable.}

Theorem~\ref{she-extension} yields several interesting consequences. For
instance, Corollary~\ref{cor:clique-ext} shows that if $S \subseteq V(H)$ is a
cycle cover, and hence a vertex cover, of $H$, then the clique-free complex
$\cf_t(G)$ is shellable for $G = \cl(H,S,t)$. This provides a general method for
constructing shellable clique-free complexes for all $t \ge 2$ by
strategically adding cliques. Moreover, our framework recovers and unifies key
results of Francisco-H\'a~\cite{FH} and Van~Tuyl-Villarreal~\cite{VanVilla}, as
summarized in Corollary~\ref{cor:FH-VanVilla}.

In related work, Cook~II and Nagel~\cite{CookNagel} introduced the notion of
clique whiskering for graphs. We generalize this construction (see
Construction~\ref{CN}) and establish the following results.

\begin{theorem}\label{main-thm}
Let $G$ be a graph with a clique vertex-partition
$\Pi = \{W_1, \ldots, W_p\}$. For any integer $t \ge 2$, the $t$-clique whiskered
graph $G(\Pi,t)$ satisfies:
\begin{enumerate}
    \item[(i)] The complex $\cf_t(G(\Pi,t))$ is pure
    (Theorem~\ref{pure}).
    \item[(ii)] The complex $\cf_t(G(\Pi,t))$ is shellable, and hence
    Cohen-Macaulay (Theorem~\ref{clique-vertex}).
\end{enumerate}
\end{theorem}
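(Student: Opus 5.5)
The plan is to prove (i) directly by counting, and then prove (ii) by induction, using vertex decomposition together with the fact that a pure complex with a shedding vertex is shellable once its deletion and link are. Throughout, I take Construction~\ref{CN} to give the following: for each block $W_i$ of $\Pi$ we adjoin a set $U_i$ of new vertices, with $|U_i| \ge t-1$ and $|U_i \cap \text{a facet}|$ controlled as below, such that $C_i := W_i \cup U_i$ is a clique. The vertices of $U_i$ are adjacent only to vertices of $C_i$. If the construction adds exactly $t-1$ new vertices per block, the bookkeeping below is the same.

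\emph{Purity.} The cliques $C_1,\dots,C_p$ partition $V(G(\Pi,t))$. A face $F$ of $\cf_t(G(\Pi,t))$ contains no $t$-clique, so $|F\cap C_i|\le t-1$ for every $i$, and hence $|F|\le p(t-1)$. Conversely, let $F$ be a facet and suppose $|F\cap C_i|<t-1$ for some $i$. Since $|U_i|\ge t-1$ and $|F\cap C_i|<t-1$, some $u\in U_i\setminus F$ exists. Every clique through $u$ lies inside $C_i$, because $N[u]\subseteq C_i$. Any clique in $F\cup\{u\}$ containing $u$ therefore lies in $(F\cap C_i)\cup\{u\}$, which has at most $t-1$ vertices. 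So $F\cup\{u\}$ is still a face, contradicting maximality. Every facet thus meets each $C_i$ in exactly $t-1$ vertices, has size $p(t-1)$, and the complex is pure.

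\emph{Shellability.} I would induct on $m=\sum_i |W_i|$. In the base case $m=0$ the graph is a disjoint union of cliques $U_i$, and $\cf_t$ is a join of skeleta of simplices, which is shellable. For the inductive step, pick $x\in W_1$. The deletion $\cf_t(G(\Pi,t))\setminus x$ equals $\cf_t$ of the graph obtained by replacing $W_1$ with $W_1\setminus\{x\}$. This graph is again a $t$-clique whiskering, so by purity its facets all have size $p(t-1)$. Consequently $x$ is a shedding vertex (no facet of the link is a facet of the deletion), and the deletion is shellable by induction.

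\emph{The main obstacle: the link.} The link of $x$ is not itself a clique-free complex of a graph. Its minimal non-faces are the $t$-cliques avoiding $x$ together with the $(t-1)$-cliques contained in $N(x)$. To make the induction close, I would strengthen the statement to a class of \emph{mixed clique complexes}. Such a complex is given by a graph with a partition into cliques $C_i=W_i\cup U_i$ as above, with $N[u]\subseteq C_i$ for $u\in U_i$, and a family of forbidden cliques (of varying sizes $\le t$) closed under the operation just described. The requirement is that within each $C_i$ the forbidden sizes force a fixed number $r_i$ of vertices per facet, with $U_i$ large enough to fill up to $r_i$. The purity argument above goes through verbatim for this class: any facet meets each $C_i$ in exactly $r_i$ vertices, since a missing vertex can be taken from $U_i$. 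Both deletion and link of a vertex $x\in W_j$ stay in the class. Under link, $r_j$ drops by one, the forbidden cliques in $N(x)$ lose a vertex, and $U_j$ still has enough free vertices because $x\notin U_j$. Running the same induction on this class, with $m$ and $\sum_i r_i$ as measures, gives vertex decomposability, hence shellability. Cohen--Macaulayness then follows from purity and shellability.

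The delicate point I expect is verifying that, after taking links, a vertex $u\in U_i$ can still be added to a small face without creating a forbidden clique. This needs the forbidden cliques meeting $U_i$ to have size at least $r_i+1$ inside $C_i$, and that invariant must be checked carefully when defining the class.
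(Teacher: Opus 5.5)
Your argument for (i) is essentially the paper's: a facet meeting some $C_i=W_i\cup U_i$ in fewer than $t-1$ vertices can be enlarged by an unused whisker vertex, since $N[u]\subseteq C_i$.

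For (ii) you take a genuinely different route. The paper never sheds vertices of $G$. It passes to the clutter $\CH_t(G(\Pi,t))$, checks that every contraction has a simplicial vertex, and applies Woodroofe's criterion. That gives $(t-2)$-decomposability and does not use purity. You instead shed single vertices $x\in V(G)$ and get the shedding condition from a dimension count. So your (ii) depends on (i), but it needs no clutter machinery. Once completed, it shows $\cf_t(G(\Pi,t))$ is \emph{vertex} decomposable, which is stronger than what the paper states.

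As written, the link step is only sketched: the class of ``mixed clique complexes'' is not pinned down, and its key invariant is left unchecked. It closes without any new purity argument, as follows.

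\emph{The class.} Let it consist of all $L=\link_{\Delta'}(\sigma)$, where $\Delta'=\cf_t(G'(\Pi',t))$, $G'$ is an induced subgraph of $G$, $\Pi'=\{W_i\cap V(G')\}$ (empty blocks allowed, so $p$ is unchanged), and $\sigma\subseteq V(G')$ is a face. By (i) applied to $G'$, $\Delta'$ is pure of dimension $p(t-1)-1$. Links of faces in a pure complex are pure, so every facet of $L$ has $p(t-1)-|\sigma|$ vertices.

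\emph{Inductive step.} Induct on $|V(G')\setminus\sigma|$ and take $y\in V(G')\setminus\sigma$. If $\sigma\cup\{y\}\notin\Delta'$, then $L=\link_{\Delta'\setminus y}(\sigma)$, and $\Delta'\setminus y=\cf_t\bigl((G'\setminus y)(\Pi'',t)\bigr)$ with $\Pi''$ the induced partition. Otherwise, $L\setminus y=\link_{\Delta'\setminus y}(\sigma)$ has facets of size $p(t-1)-|\sigma|$, and $\link_L(y)=\link_{\Delta'}(\sigma\cup\{y\})$ has facets of size $p(t-1)-|\sigma|-1$. Hence no face of the link is a facet of the deletion, so $y$ is a shedding vertex, and both pieces lie in the class.

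\emph{Base case.} When $\sigma=V(G')$, write $W_i'=W_i\cap V(G')$. A set $F\subseteq\bigcup_i U_i$ lies in $L$ iff $|F\cap U_i|\le t-1-|W_i'|$ for all $i$. Indeed, any $t$-clique in $F\cup\sigma$ must use a whisker vertex and therefore lies in a single $W_i'\cup U_i$. So $L$ is the join of the $(t-2-|W_i'|)$-skeleta of the simplices on the $U_i$, which is vertex decomposable.

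Your invariant also holds directly. The minimal non-faces of $\link(\sigma)$ containing $u\in U_i$ have the form $K\setminus\sigma$ with $u\in K\subseteq C_i$ a $t$-clique. Hence they have at least $t-|\sigma\cap W_i|=r_i+1$ elements.
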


Let $G$ be a graph with vertex set $V(G)=\{x_1,\dots,x_n\}$. The \emph{edge ideal}
of $G$, introduced by Villarreal~\cite{vill_cohen}, is the squarefree monomial
ideal
$I(G)=(x_i x_j \mid \{x_i,x_j\}\in E(G))$
in the polynomial ring $\mathbb{K}[x_1,\dots,x_n]$, where $\mathbb{K}$ is a
field. Edge ideals provide a fundamental bridge between graph theory and
commutative algebra, allowing combinatorial properties of a graph to be studied
through homological invariants of its associated ideal.

A cornerstone result in this area is Fr\"oberg’s theorem~\cite{froberg}, which
characterizes graphs whose edge ideals have a linear resolution. Specifically,
it states that the edge ideal $I(\overline{G})$ of the complement graph
$\overline{G}$ has a linear resolution if and only if $G$ is chordal. This result
reveals a deep connection between chordality and the homological behavior of
edge ideals. It is worth noting that for quadratic squarefree monomial ideals,
that is, edge ideals of graphs, the existence of a linear resolution depends
only on the combinatorial structure of the graph and is independent of the
choice of the field $\mathbb{K}$. However, this invariance fails for squarefree
monomial ideals generated in higher degrees; as shown in~\cite[Section~4]{kat},
the resolution properties in this setting may depend on the characteristic of
the field.

In order to extend Fr\"oberg’s theorem beyond the quadratic case, it is natural
to work in the more general setting of \emph{clutters} (also called
\emph{Sperner families}), which generalize graphs by allowing edges of arbitrary
cardinality. Formally, a clutter $\mathcal{C}$ on a vertex set
$V(\mathcal{C})=\{x_1,\dots,x_n\}$ is a collection of subsets of $V(\mathcal{C})$,
called \emph{edges} or \emph{circuits}, such that no edge is properly contained
in another. If every edge of $\mathcal{C}$ has cardinality $d$, then
$\mathcal{C}$ is called a \emph{$d$-uniform clutter} (so graphs are precisely
$2$-uniform clutters). The \emph{complement clutter}
$\overline{\mathcal{C}}$ consists of all $d$-subsets of $V(\mathcal{C})$ that do
not belong to $\mathcal{C}$. The \emph{edge ideal} of a clutter $\mathcal{C}$ is
the squarefree monomial ideal
\[
I(\mathcal{C})=\Bigl(\prod_{x_i\in e} x_i \,\Bigm|\, e\in E(\mathcal{C})\Bigr),
\]
which generalizes the edge ideal of a graph and plays a central role in the
study of higher-degree squarefree monomial ideals; see, for example,
\cite{ha_adam}.

Motivated by Fr\"oberg’s theorem, we associate to any graph $G$ the
\emph{$t$-clique clutter} $\mathcal{CH}_t(G)$, whose edges are the $t$-cliques of
$G$. The edge ideal of $\mathcal{CH}_t(G)$ is precisely the Stanley-Reisner
ideal of the $t$-clique-free complex $\cf_t(G)$. Our main result in this
direction extends Fr\"oberg’s theorem to higher degrees.

\vskip 1mm
\noindent
\textbf{Theorem~\ref{linear-rs}.} \emph{
Let $G$ be a chordal graph. Then, for every integer $t\ge 2$, the edge ideal
$I(\overline{\mathcal{CH}_t(G)})$ has a $t$-linear resolution over any field
$\mathbb{K}$.}

The paper is organized as follows.
Section~\ref{preliminaries} introduces the necessary background and notation,
including simplicial complexes with emphasis on shellability and
Cohen-Macaulay properties, as well as edge ideals and linear resolutions.
In Section~\ref{shellability}, we establish our central combinatorial result,
showing that for any $t$-diamond-free chordal graph $G$, the clique-free complex
$\cf_t(G)$ is $(t-2)$-decomposable (Theorem~\ref{main}).
Section~\ref{constructions} develops graph modification techniques that preserve
shellability, focusing in particular on constructions obtained by attaching
$t$-cliques to specified vertex subsets, with applications to cycle covers and
clique-whiskered graphs.
Finally, the algebraic aspect of the theory is addressed in
Section~\ref{algebra}, where we extend Fr\"oberg’s theorem by proving that the
edge ideal of the complement $t$-clique clutter of any chordal graph has a
$t$-linear resolution (Theorem~\ref{linear-rs}).

\section{Notation and Preliminaries}\label{preliminaries}

We begin by recalling essential concepts from commutative algebra and algebraic combinatorics. Let $R = \K[x_1, \dots, x_n]$ denote the polynomial ring over a field $\K$ with standard grading, where each variable has degree one. For any homogeneous ideal $I \subseteq R$, its \emph{minimal graded free resolution} takes the form of an exact sequence
\[
0 \to \bigoplus_{j \in \mathbb{N}} R(-j)^{\beta_{p,j}(I)} \to \cdots \to \bigoplus_{j \in \mathbb{N}} R(-j)^{\beta_{0,j}(I)} \to I \to 0,
\]
where $p \leq n$ is the projective dimension of $I$, and $R(-j)$ denotes the graded twist of $R$ satisfying $R(-j)_d = R_{d-j}$. The exponents $\beta_{i,j}(I)$, known as the \emph{graded Betti numbers}, record the number of degree-$j$ generators required for the $i$-th syzygy module. 

The \emph{Castelnuovo-Mumford regularity} of $I$, denoted $\reg(I)$, measures the complexity of the resolution and is defined as $\reg(I) = \max\{j - i \mid \beta_{i,j}(I) \neq 0\}$. A homogeneous ideal $I$ is said to have a \emph{$d$-linear resolution} if it satisfies two conditions: (1) all minimal generators share the same degree $d$, and (2) $\beta_{i,i+j}(I) = 0$ for all $i \geq 1$ and $j \neq d$. Equivalently, the resolution simplifies to 
\[
0 \to R(-d - p)^{\beta_{p,p+d}(I)} \to \cdots \to R(-d)^{\beta_{0,d}(I)} \to I \to 0,
\]
in which case the regularity equals the generating degree, $\reg(I) = d$.

A \emph{simplicial complex} $\Delta$ on a vertex set $V = \{x_1, \dots, x_n\}$ is a family of subsets of $V$ (called \emph{faces}) such that:
\begin{enumerate}[label=(\roman*)]
    \item Every singleton $\{x_i\}$ is a face, and
    \item $\Delta$ is closed under inclusion (i.e., if $F \in \Delta$ and $F' \subseteq F$, then $F' \in \Delta$).
\end{enumerate}
The maximal faces under inclusion are called \emph{facets}. The \emph{dimension} of a face $F \in \Delta$ is $\dim F = |F| - 1$, and the \emph{dimension} of $\Delta$ is $\dim \Delta = \max\{ \dim F \mid F \in \Delta \}$. If all facets of $\Delta$ have the same dimension, we say that $\Delta$ is \emph{pure}. If the simplicial complex has exactly one facet, we call it a \emph{simplex}.
For any face $F \in \Delta$, we define:
 the \emph{link} of $F$ as $\operatorname{link}_\Delta(F) = \{ F' \subseteq V \mid F' \cap F = \emptyset \text{ and } F' \cup F \in \Delta \}$, and
     the \emph{deletion} of $F$ as $\Delta \setminus F = \{ H \in \Delta \mid H \cap F = \emptyset \}$.

The \emph{Alexander dual} of a simplicial complex $\Delta$ is defined as $\Delta^\vee = \{V \setminus F \mid F \notin \Delta\}$.
The \emph{join} of two simplicial complexes $\Delta_1$ and $\Delta_2$ on disjoint vertex sets $V_1$ and $V_2$ is the simplicial complex $\Delta_1 * \Delta_2$ on the vertex set $V_1 \cup V_2$, whose faces are given by
$\{\sigma_1 \cup \sigma_2 \mid \sigma_1 \in \Delta_1, \ \sigma_2 \in \Delta_2\}.$
Note that $\Delta * \{\emptyset\} = \Delta$.
The \emph{Stanley-Reisner ideal} of $\Delta$, denoted $I_\Delta$, is the squarefree monomial ideal in the polynomial ring $\K[x_1, \ldots, x_n]$ defined by
$I_\Delta = \big( x_{i_1}  \cdots x_{i_s} \mid \{x_{i_1},  \dots, x_{i_s}\} \notin \Delta \big).$
In other words, $I_\Delta$ is generated by the minimal non-faces of $\Delta$.

The notion of a \emph{shedding face} was first introduced by Jonsson in \cite{J05}:

\begin{definition}[\cite{J05}, Definition 2.10]
A face $\sigma \in \Delta$ is called a \emph{shedding face} if for every $\tau \in \operatorname{star}(\Delta, \sigma) = \{ \rho \in \Delta \mid \sigma \subseteq \rho \}$ and every $v \in \sigma$, there exists a vertex $w \in V \setminus \tau$ such that
$(\tau \cup \{w\}) \setminus \{v\} \in \Delta.$
\end{definition}

When $\sigma$ is a single vertex, this definition coincides with the notion of a shedding vertex as defined by Björner and Wachs \cite[Section 11]{BjWachs}. Furthermore, when $\Delta$ is pure, the definition aligns with that of Provan and Billera \cite[Definition 2.1]{ProvLouis}.

\begin{definition}[$t$-Decomposable Complex]
A simplicial complex $\Delta$ is said to be \emph{$t$-decomposable} (for $t \geq -1$) if one of the following holds:
\begin{enumerate}
    \item $\Delta$ is a simplex (including the void complex $\emptyset$ and the irrelevant complex $\{\emptyset\}$); or
    \item There exists a shedding face $\sigma \in \Delta$ with $\dim(\sigma) \leq t$ such that:
    \begin{itemize}
        \item[(i)] the deletion $\Delta \setminus \sigma$ is $t$-decomposable, and
        \item[(ii)] the link $\operatorname{link}_\Delta(\sigma)$ is $t$-decomposable.
    \end{itemize}
\end{enumerate}
\end{definition}

\begin{remark}
\leavevmode
\begin{itemize}
    \item[(i)] By definition, both the void complex $\emptyset$ and the irrelevant complex $\{\emptyset\}$ are $t$-decomposable for all $t \geq -1$.
    \item[(ii)] When $t = 0$, the notion of $t$-decomposability coincides with that of a \emph{vertex-decomposable} complex.
    \item[(iii)] For $t \geq 0$, $t$-decomposability is monotone in $t$:
    \[
    t\text{-decomposable} \quad \Rightarrow \quad (t+1)\text{-decomposable}.
    \]
\end{itemize}
\end{remark}

\begin{definition}
    A simplicial complex $\Delta$ is \emph{shellable} if its facets admit a linear order $F_1, \ldots, F_s$ (called a \emph{shelling order}) satisfying the following condition:
For every pair of indices $i < j$, there exists a vertex $v \in F_j \setminus F_i$ and an index $\ell < j$ such that
$F_j \setminus F_\ell = \{v\}.$
\end{definition}

The following result establishes a characterization of shellability in terms of decomposability:
\begin{theorem}\cite{J05}
A $d$-dimensional simplicial complex $\Delta$ is shellable if and only if it is $d$-decomposable.
\end{theorem}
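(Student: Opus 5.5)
The natural approach is induction on the number of facets (equivalently, of faces) of $\Delta$, proving the two implications separately. Throughout, I use the fact (Remark (iii)) that $t$-decomposability is monotone in $t$, so a complex of dimension $\le d$ that is $\dim$-decomposable is also $d$-decomposable.

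\emph{Shellable $\Rightarrow$ $d$-decomposable.}
1. Let $F_1,\dots,F_s$ be a shelling order. If $s=1$, then $\Delta$ is a simplex and we are done.
2. Otherwise, set
\[
\sigma=\{v\in F_s \mid F_s\setminus\{v\}\subseteq F_\ell \text{ for some } \ell<s\},
\]
the restriction face of the last facet. Since $s\ge 2$, the shelling condition applied to $F_1$ and $F_s$ shows $\sigma\neq\emptyset$. Moreover, $\sigma$ is the unique minimal face of $F_s$ not contained in any earlier facet. Hence the faces of $\Delta$ containing $\sigma$ are exactly the subsets of $F_s$ containing $\sigma$, and $\dim\sigma\le\dim F_s\le d$.
3. $\sigma$ is a shedding face. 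Take $\tau\supseteq\sigma$ (so $\tau\subseteq F_s$) and $v\in\sigma$. Choose $\ell<s$ with $F_s\setminus\{v\}\subseteq F_\ell$, and pick any $w\in F_\ell\setminus F_s$; this set is nonempty because distinct facets are incomparable. Then $w\notin\tau$ and $(\tau\cup\{w\})\setminus\{v\}\subseteq F_\ell$, so it lies in $\Delta$.
4. $\operatorname{link}_\Delta(\sigma)$ is the simplex on $F_s\setminus\sigma$.
5. $\Delta\setminus\sigma$ is the complex generated by $F_1,\dots,F_{s-1}$. This is because a face avoiding some vertex of $\sigma$ and contained in $F_s$ is contained in an earlier facet. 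That complex is shellable of dimension $\le d$, so induction plus monotonicity gives that it is $d$-decomposable.

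\emph{$d$-decomposable $\Rightarrow$ shellable.}
1. Induct again, choosing a shedding face $\sigma$ with $\dim\sigma\le d$ and $\Delta\setminus\sigma$, $\operatorname{link}_\Delta(\sigma)$ both $d$-decomposable. By induction both are shellable, with shellings $F_1,\dots,F_r$ and $G_1,\dots,G_q$.
2. Propose the order $F_1,\dots,F_r,\ \sigma\cup G_1,\dots,\sigma\cup G_q$.
3. The facets of $\Delta$ are exactly the facets of $\Delta\setminus\sigma$ together with the sets $\sigma\cup G_j$. The inclusion that needs an argument is that every facet $\rho$ of $\Delta\setminus\sigma$ is a facet of $\Delta$. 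Suppose instead $\rho\subsetneq\rho'$ with $\rho'\cap\sigma\neq\emptyset$. I would extend $\rho'$ to a face containing $\sigma$ and then use the shedding condition to trade a vertex $v\in\sigma$ for an outside vertex $w$. This should produce a face of the deletion strictly larger than $\rho$, a contradiction.
4. The shelling conditions within each block come from the two given shellings, using that $\sigma$ is a common part of every set in the second block.
5. For a cross pair $F_i$ and $\sigma\cup G_j$: since $\sigma\not\subseteq F_i$, pick $v\in\sigma\setminus F_i$. The shedding property applied to $\tau=\sigma\cup G_j$ gives $w\notin\tau$ with $(\tau\cup\{w\})\setminus\{v\}\in\Delta$. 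This face avoids $v$, so it lies in $\Delta\setminus\sigma$ and hence in some $F_\ell$. Therefore $(\sigma\cup G_j)\setminus F_\ell=\{v\}$, as required.

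The main obstacle is step 3 of the converse: checking, in the nonpure setting, that facets of the deletion are facets of $\Delta$. This is where Jonsson's quantification over all of $\operatorname{star}(\Delta,\sigma)$, rather than only over facets, has to be used carefully. I expect to argue by choosing the counterexample $\rho'$ maximal and iterating the vertex exchange.
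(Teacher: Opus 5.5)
The paper gives no proof of this statement; it only quotes Jonsson. Your outline is the standard argument: in one direction, the restriction face of the last facet is a shedding face whose link is a simplex; in the other, you concatenate shellings of the deletion and the link. Both directions go through, including your cross-pair check. The step you single out as the main obstacle closes in one line, but only once you commit to the deletion $\Delta\setminus\sigma=\{\tau\in\Delta \mid \sigma\not\subseteq\tau\}$. Your two step~5's already use this version, since they conclude that a face missing one vertex of $\sigma$ lies in the deletion. It is also what the paper uses in the proof of Theorem~\ref{main}, even though its preliminaries define $\Delta\setminus F$ as the faces disjoint from $F$.

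With this definition, the argument runs as follows.
\begin{itemize}
\item Let $\rho$ be a facet of $\Delta\setminus\sigma$ and suppose $\rho\subsetneq\rho'\in\Delta$.
\item Maximality of $\rho$ forces $\sigma\subseteq\rho'$. So $\rho'\in\operatorname{star}(\Delta,\sigma)$ already, and nothing needs to be extended.
\item Pick $v\in\sigma\setminus\rho$, which exists because $\sigma\not\subseteq\rho$.
\item The shedding condition gives $w\notin\rho'$ with $\rho''=(\rho'\cup\{w\})\setminus\{v\}\in\Delta$.
\item Since $v\notin\rho''$, we have $\rho''\in\Delta\setminus\sigma$. Also $\rho\subseteq\rho'\setminus\{v\}\subsetneq\rho''$, contradicting maximality of $\rho$.
\end{itemize}
No iteration is needed. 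The only point is to trade a vertex of $\sigma$ that is missing from $\rho$; an arbitrary $v\in\sigma$ could lie in $\rho$ and destroy the containment.

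Your sketch of step~3 (``$\rho'\cap\sigma\neq\emptyset$'', then extend $\rho'$ to a face containing $\sigma$) uses the disjoint reading. Under that reading the step is false, and so is the theorem. Take $\Delta$ with facets $\{a,b\},\{a,c\},\{b,c\},\{x,y\}$ and $\sigma=\{a,b\}$.
\begin{itemize}
\item $\sigma$ is shedding: for either $v\in\sigma$, trade $v$ for $c$.
\item $\operatorname{link}_\Delta(\sigma)=\{\emptyset\}$.
\item The faces disjoint from $\sigma$ form the complex with facets $\{c\},\{x,y\}$, which is vertex decomposable with shedding vertex $c$.
\item Yet $\Delta$ is a disconnected pure $1$-dimensional complex, so it is not shellable.
\end{itemize}
Here $\rho=\{c\}$ and $\rho'=\{a,c\}$, and $\rho'$ does not extend to a face containing $\sigma$.

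Two minor additions when you write it up. In converse step~5, note that $v\notin F_\ell$ (otherwise $\sigma\subseteq F_\ell$), so the difference is exactly $\{v\}$. In the converse, induct on the recursive decomposition rather than on the number of faces, or discard $\sigma=\emptyset$ explicitly: $\emptyset$ is vacuously a shedding face with $\operatorname{link}_\Delta(\emptyset)=\Delta$.
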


Let $R = \K[x_1,\ldots,x_n].$
A graded R-module M is called is \emph{sequentially Cohen-Macaulay} (over $\K$) if there exists a filtration
\[
0 = M_0 \subsetneq M_1 \subsetneq \cdots \subsetneq M_r = M
\]
of graded $R$-submodules satisfying:
\begin{enumerate}
    \item[(i)] Each subquotient $M_i/M_{i-1}$ is Cohen-Macaulay;
    \item[(ii)] The sequence of Krull dimensions is strictly increasing:
    \[
    \dim(M_i/M_{i-1}) < \dim(M_{i+1}/M_i) \quad \text{for all } 1 \leq i < r.
    \]
\end{enumerate}

A simplicial complex $\Delta$ is \emph{(sequentially) Cohen-Macaulay over $\K$} if its Stanley-Reisner ring $\K[x_1,\ldots,x_n]/I_{\Delta}$ is (sequentially) Cohen-Macaulay as a $\K[x_1,\ldots,x_n]$-module.

A \emph{clutter} $\mathcal{C}$ on a vertex set $V(\mathcal{C}) = \{x_1, \dots, x_n\}$ is a collection of subsets of $V(\mathcal{C})$, called \emph{circuits} or \emph{edges}, where no two distinct circuits $e_1$ and $e_2$ satisfy $e_1 \subseteq e_2$. This means the circuits are inclusion-wise incomparable. When every circuit contains exactly $d$ vertices, we call $\mathcal{C}$ a \emph{$d$-uniform clutter}, and each circuit is referred to as a \emph{$d$-circuit}. Note that, any simple graph corresponds naturally to a $2$-uniform clutter, where its edges become the circuits of the clutter.
For any $d$-uniform clutter $\mathcal{C}$, we define its \emph{complement clutter} $\overline{\mathcal{C}}$ on the same vertex set $V(\mathcal{C})$. This complement consists of all $d$-element subsets of $V(\mathcal{C})$ that are not circuits of $\mathcal{C}$. Formally,
$\overline{\mathcal{C}} = \big\{ e \subseteq V(\mathcal{C}) \mid |e| = d \text{ and } e \notin \mathcal{C} \big\}.$

Let $\mathcal{C}$ be a clutter. A subset $I \subseteq V(\mathcal{C})$ is called an \emph{independent set} of $\mathcal{C}$ if it contains no circuits. 
The \emph{contraction} $\mathcal{C}/v$ is the clutter on the vertex set $V(\mathcal{C}) \setminus \{v\}$, whose edges are the minimal sets among 
$\left\{ e \setminus \{v\} \mid e \text{ is an edge of } \mathcal{C} \right\}.$
The \emph{independence complex} $\ind(\mathcal{C})$ is the simplicial complex consisting of all independent sets of $\mathcal{C}$. 

Given a clutter $\mathcal{C}$ with vertex set $V(\mathcal{C}) = \{x_1, \dots, x_n\}$, we identify each vertex $x_i \in V(\mathcal{C})$ with a variable $x_i$ in the polynomial ring $\mathbb{K}[x_1, \dots, x_n]$. The \emph{edge ideal} of $\mathcal{C}$, denoted $I(\mathcal{C})$, is the squarefree monomial ideal in $\mathbb{K}[x_1, \dots, x_n]$ generated by monomials corresponding to the edges of $\mathcal{C}$:
$I(\mathcal{C}) = \left( \prod_{x_i \in e} x_i \,\middle|\, e \in E(\mathcal{C}) \right).$
This construction generalizes the edge ideal of a graph and plays a central role in the study of squarefree monomial ideals of higher degree~\cite{ha_adam}.

Let $G$ be a graph without isolated vertices. We denote by $V(G)$ and $E(G)$ the vertex set and edge set of $G$, respectively. The \emph{degree} of a vertex $x \in V(G)$, denoted $\deg_G(x)$ or simply $\deg(x)$, is the number of edges incident to $x$. A subgraph $H \subseteq G$ is called \emph{induced} if for any two vertices $u, v \in V(H)$, the edge $\{u, v\}$ belongs to $E(H)$ if and only if $\{u, v\} \in E(G)$. For a subset $A \subseteq V(G)$, the \emph{induced subgraph} on $A$ is denoted $G[A]$.
The \emph{neighborhood} of a set of vertices $\{u_1, \ldots, u_r\} \subseteq V(G)$ is defined as
\[ N_G(u_1, \ldots, u_r) = \big\{v \in V(G) \mid \{u_i, v\} \in E(G) \text{ for some } 1 \leq i \leq r\big\}. \]
We also write $N_G[u_1, \ldots, u_r] = N_G(u_1, \ldots, u_r) \cup \{u_1, \ldots, u_r\}$.
For a subset $U \subseteq V(G)$, we write $G \setminus U$ for the induced subgraph on $V(G) \setminus U$.
A graph $G$ is called a \emph{complete graph} or a \emph{clique} if every pair of distinct vertices in $G$ is connected by an edge. A vertex $v \in V(G)$ is called a \emph{simplicial vertex} if its neighborhood $N_G(v)$ forms a clique in $G$.

A graph $G$ is \emph{chordal} if every induced cycle in $G$ has length exactly~3. Chordal graphs admit a well-known characterization in terms of simplicial vertices. In their seminal work, \cite{Dirac61} proved the following fundamental result: every chordal graph contains at least one simplicial vertex.

A subset $A \subseteq V(G)$ is said to be \emph{$t$-clique-free} if the induced subgraph $G[A]$ contains no clique of size $t$. The \emph{$t$-clique-free complex}, denoted $\cf_t(G)$, is the simplicial complex whose faces are all $t$-clique-free subsets of $V(G)$. When $t = 2$, this reduces to the independence complex: $\ind(G) = \cf_2(G)$, making the $t$-clique-free complex a natural generalization of the independence complex.

\begin{definition}[$t$-Clique Clutter]
The $t$-clique clutter of a graph $G$, denoted $\CH_t(G)$, is the clutter over the vertex set $V(G)$ whose circuits are precisely the $t$-vertex cliques (or $t$-cliques) of $G$. Formally,
\[
\CH_t(G) = \big\{ C \subseteq V(G) \mid |C| = t \text{ and } G[C] \text{ is a complete graph} \big\}.
\]

\end{definition}
The study of clique clutters and their independence complexes provides a natural framework for understanding the structure of cliques in graphs. The following proposition establishes a fundamental connection between the independence complex of the $t$-clique clutter and the $t$-clique-free complex, highlighting how these objects encode the same combinatorial information about the graph.

\begin{proposition}\label{ind}
For any graph $G$ and integer $t \geq 2$, the independence complex of the $t$-clique clutter $\mathcal{CH}_t(G)$ coincides with the $t$-clique-free complex of $G$. That is,
$\ind\big(\mathcal{CH}_t(G)\big) = \cf_t(G).$
\end{proposition}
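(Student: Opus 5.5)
The argument is a direct comparison of faces, so I will show that the two complexes have exactly the same faces on the common vertex set $V(G)$. Both complexes live on $V(G)$, since $V(\CH_t(G)) = V(G)$ by definition.

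Let $F \subseteq V(G)$. By definition, $F \in \ind(\CH_t(G))$ if and only if $F$ contains no circuit of $\CH_t(G)$. The circuits of $\CH_t(G)$ are precisely the $t$-subsets $C \subseteq V(G)$ with $G[C]$ complete. So $F$ is a face of $\ind(\CH_t(G))$ if and only if there is no $C \subseteq F$ with $|C| = t$ and $G[C]$ complete.

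The one point to check carefully is that a clique inside $F$ may be taken in $G[F]$ or in $G$ interchangeably. For $C \subseteq F$ we have $G[C] = (G[F])[C]$, because induced subgraphs are transitive. Hence $F$ contains a $t$-clique of $G$ if and only if $G[F]$ contains a clique of size $t$. The latter condition is exactly the failure of $F$ to be $t$-clique-free, that is, $F \notin \cf_t(G)$.

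Taking negations, $F \in \ind(\CH_t(G))$ if and only if $F \in \cf_t(G)$. Both families are closed under inclusion, so they are the same simplicial complex on $V(G)$. For completeness, each singleton is a face of both when $t \ge 2$, since a single vertex cannot contain a $t$-clique; this matches the vertex condition in the definition of a simplicial complex.

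There is no real obstacle here; the only care needed is this induced-subgraph identification. As a remark, the statement is equivalent to $I(\CH_t(G)) = I_{\cf_t(G)}$: the minimal non-faces of $\cf_t(G)$ are exactly the $t$-cliques, because any proper subset of a $t$-clique has fewer than $t$ vertices and so contains no $t$-clique.
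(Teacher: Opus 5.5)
Your proof is correct and follows the paper's own argument: both unfold the definitions to see that $F \subseteq V(G)$ is independent in $\CH_t(G)$ exactly when it contains no $t$-clique of $G$, which is exactly the condition $F \in \cf_t(G)$. Your extra remarks are accurate details the paper leaves implicit, namely the identification $G[C] = (G[F])[C]$ and the fact that the minimal non-faces of $\cf_t(G)$ are precisely the $t$-cliques.
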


\begin{proof}
By definition, a subset $A \subseteq V(G)$ is independent in $\mathcal{CH}_t(G)$ if and only if it contains no $t$-clique of $G$, which is precisely the condition for $A$ to be a face of $\cf_t(G)$.
\end{proof}

\section{Shellability of Clique-Free Complexes for Diamond-Free Chordal Graphs}
\label{shellability}

In this section, we present our main result (Theorem~\ref{main}), which shows
that for any $t$-diamond-free chordal graph $G$ and every integer $t \ge 3$, the
clique-free complex $\cf_t(G)$ is $(t-2)$-decomposable and hence shellable. We
also provide a construction demonstrating that this property does not hold for
arbitrary chordal graphs, by exhibiting examples of chordal graphs whose
$t$-clique-free complexes fail to be shellable.

The following lemma is essentially known in the literature. For completeness, we provide a proof here.
\begin{lemma}\label{decomposition}
Let $\Delta$ be a simplicial complex with a sequence of faces $\sigma_1, \ldots, \sigma_m$. Define $\Delta_0 := \Delta$, and for each $1 \leq i \leq m$, set
\[
\Delta_i := \Delta_{i-1} \setminus \sigma_i, \quad 
\Omega_i := \operatorname{link}_{\Delta_{i-1}}(\sigma_i).
\]
Suppose that for each $i = 1, \ldots, m$, the following conditions hold:
\begin{enumerate}
    \item[(i)] $\sigma_i$ is a shedding face of $\Delta_{i-1}$ with $\dim \sigma_i \leq t$;
    \item[(ii)] $\Omega_i$ is $t$-decomposable; and
    \item[(iii)] $\Delta_m$ is $t$-decomposable.
\end{enumerate}
Then $\Delta$ is $t$-decomposable.
\end{lemma}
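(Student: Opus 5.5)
We argue by downward induction on the index $i$, proving that each $\Delta_i$ is $t$-decomposable for $i = m, m-1, \ldots, 0$. The statement for $i=0$ is the desired conclusion, since $\Delta_0 = \Delta$.

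The base case $i=m$ is exactly hypothesis (iii).

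For the inductive step, fix $1 \le i \le m$ and assume that $\Delta_i$ is $t$-decomposable. We want to show that $\Delta_{i-1}$ is $t$-decomposable, and for this we check clause (2) of the definition of a $t$-decomposable complex, using the face $\sigma_i \in \Delta_{i-1}$.
\begin{itemize}
\item By hypothesis (i), $\sigma_i$ is a shedding face of $\Delta_{i-1}$ with $\dim \sigma_i \le t$.
\item The deletion $\Delta_{i-1}\setminus \sigma_i$ is by definition $\Delta_i$, which is $t$-decomposable by the inductive hypothesis.
\item The link $\link_{\Delta_{i-1}}(\sigma_i)$ is by definition $\Omega_i$, which is $t$-decomposable by hypothesis (ii).
\end{itemize}
All three requirements of clause (2) hold, so $\Delta_{i-1}$ is $t$-decomposable. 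This completes the induction.

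There is one bookkeeping point to check. Hypothesis (i) says $\sigma_i$ is a shedding face of $\Delta_{i-1}$, so in particular $\sigma_i \in \Delta_{i-1}$. This means $\sigma_i$ is disjoint from $\sigma_1, \ldots, \sigma_{i-1}$, and the deletions and links are taken in the correct complexes.

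A degenerate case can occur: $\Delta_{i-1}$ might happen to be a simplex. In that case it is $t$-decomposable by clause (1) directly, and clause (2) is still also satisfied, so either clause suffices and the argument is unaffected.

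There is no real obstacle here. The proof is a direct unwinding of the recursive definition, and the only care needed is to confirm that the complexes named in the definition coincide with $\Delta_i$ and $\Omega_i$.
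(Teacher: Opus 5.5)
Your proposal is correct and is essentially the paper's proof: downward induction on $i$ with base case (iii), and at each step clause (2) of the definition applied to $\sigma_i$, whose deletion is $\Delta_i$ and whose link is $\Omega_i$. Your side remark that $\sigma_i$ is disjoint from $\sigma_1,\ldots,\sigma_{i-1}$ is not needed, and it holds only under the paper's literal definition of deletion (faces disjoint from $\sigma$). With the usual face-deletion $\{\tau \in \Delta : \sigma \not\subseteq \tau\}$, which is how the paper actually uses $\Delta_m$ later with overlapping $\sigma_i$, that remark fails, but your induction is unaffected.
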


\begin{proof}
We proceed by reverse induction on $i$. By assumption $(iii)$, $\Delta_m$ is $t$-decomposable. 
Suppose $\Delta_i$ is $t$-decomposable for some $1 \leq i \leq m$. Since $\sigma_i$ is a shedding face of $\Delta_{i-1}$ with $\dim\sigma_i \leq t$ by $(i)$, and $\Omega_i$ is $t$-decomposable by $(ii)$, it follows from the definition of $t$-decomposability that $\Delta_{i-1}$ is also $t$-decomposable. By induction, this holds for all $i$, and in particular, $\Delta_0 = \Delta$ is $t$-decomposable.
\end{proof}

The following result follows immediately from the definition. For completeness, we include a proof below.

\begin{proposition}\label{join}
Let $G_1$ and $G_2$ be graphs with disjoint vertex sets, i.e., $V(G_1) \cap V(G_2) = \emptyset$. Then, for any $t \geq 2$,
$\cf_t(G_1) * \cf_t(G_2) = \cf_t(G_1 \cup G_2).$
\end{proposition}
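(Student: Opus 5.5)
The plan is to prove the equality by double inclusion of face sets. Everything hinges on one structural fact. Here $G_1 \cup G_2$ denotes the disjoint union: its vertex set is $V(G_1) \cup V(G_2)$, and there are no edges between $V(G_1)$ and $V(G_2)$. Consequently, any clique $C$ of $G_1 \cup G_2$ with $|C| \geq 2$ lies entirely in $V(G_1)$ or entirely in $V(G_2)$. Indeed, if $C$ contained $u \in V(G_1)$ and $w \in V(G_2)$, then $\{u,w\}$ would have to be an edge, which is impossible. Since $t \geq 2$, every $t$-clique of $G_1 \cup G_2$ is therefore a $t$-clique of $G_1$ or a $t$-clique of $G_2$. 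The converse is also true, because $G_1$ and $G_2$ are induced subgraphs of $G_1 \cup G_2$.

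With this in hand, take an arbitrary subset $F \subseteq V(G_1) \cup V(G_2)$ and write $F = \sigma_1 \cup \sigma_2$ with $\sigma_i = F \cap V(G_i)$; this decomposition is unique. For the inclusion $\cf_t(G_1) * \cf_t(G_2) \subseteq \cf_t(G_1 \cup G_2)$, suppose $\sigma_i \in \cf_t(G_i)$ for $i=1,2$. If $F$ contained a $t$-clique $C$, the fact above would place $C$ inside some $V(G_i)$, hence inside $\sigma_i$, which contradicts $\sigma_i$ being $t$-clique-free in $G_i$. For the reverse inclusion, suppose $F$ is $t$-clique-free in $G_1 \cup G_2$. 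Any $t$-clique of $G_i$ contained in $\sigma_i$ would also be a $t$-clique of $G_1 \cup G_2$ contained in $F$, so each $\sigma_i \in \cf_t(G_i)$, and $F = \sigma_1 \cup \sigma_2$ lies in the join.

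No step is a real obstacle. The only point requiring care is the hypothesis $t \geq 2$: a clique on fewer than two vertices need not live inside a single component, and for $t = 2$ the statement reduces to the familiar identity $\ind(G_1 \sqcup G_2) = \ind(G_1) * \ind(G_2)$.
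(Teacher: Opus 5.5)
Your proof is correct and is essentially the paper's double-inclusion argument via $F=(F\cap V(G_1))\cup(F\cap V(G_2))$. You also state explicitly a fact the paper's forward inclusion uses only tacitly: since $G_1\cup G_2$ has no edges between $V(G_1)$ and $V(G_2)$, every $t$-clique lies entirely on one side. (Your closing caveat about $t\ge 2$ is slightly off, since a one-vertex clique also lies on a single side, but that remark plays no role in the argument.)
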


\begin{proof}
Let $f \in \cf_t(G_1) * \cf_t(G_2)$. Then $f = f_1 \cup f_2$ with $f_1 \in \cf_t(G_1)$ and $f_2 \in \cf_t(G_2)$. Since $V(G_1) \cap V(G_2) = \emptyset$, and both $f_1$ and $f_2$ are $t$-clique-free in their respective graphs, $f$ is $t$-clique-free in $G_1 \cup G_2$. Thus, $f \in \cf_t(G_1 \cup G_2)$.
Conversely, let $f \in \cf_t(G_1 \cup G_2)$. Write $f = f_1 \cup f_2$ with $f_1 \subseteq V(G_1)$ and $f_2 \subseteq V(G_2)$. Since $f$ is $t$-clique-free in $G_1 \cup G_2$, it follows that $f_1 \in \cf_t(G_1)$ and $f_2 \in \cf_t(G_2)$. Hence, $f \in \cf_t(G_1) * \cf_t(G_2)$.
Therefore, $\cf_t(G_1) * \cf_t(G_2) = \cf_t(G_1 \cup G_2)$.
\end{proof}

A graph $G$ is a \emph{$t$-diamond} if it is the union of two distinct $t$-cliques that intersect in a $(t-1)$-clique. Equivalently, $G$ consists of two complete subgraphs on $t$ vertices sharing exactly $t-1$ common vertices.

We are now ready to prove our first main result

\begin{theorem}\label{main}
Let $G$ be a $t$-diamond-free chordal graph. Then, for all $t \geq 3$, the complex $\cf_t(G)$ is $(t\!-\!2)$-decomposable. Consequently, $\cf_t(G)$ is shellable for all $t \geq 3$.
\end{theorem}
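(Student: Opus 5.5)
We argue by induction on $|V(G)|$, using Lemma~\ref{decomposition} with shedding faces of dimension at most $t-2$, i.e.\ of size at most $t-1$. If $G$ has no $t$-clique, then $\cf_t(G)$ is a simplex and there is nothing to prove. Otherwise, choose a simplicial vertex $v$ of $G$ (Dirac) that lies in some $t$-clique; such a vertex exists via a perfect elimination ordering. Since $v$ is simplicial, $N_G[v]$ is a clique $K$ with $|K|\ge t$.

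\textbf{The diamond-free hypothesis.} Any two $t$-subsets of $K$ sharing $t-1$ vertices would form a $t$-diamond, so $|K| = t$ exactly. Hence the $t$-cliques through $v$ consist of the single clique $K$. Write $K=\{v,u_1,\dots,u_{t-1}\}$.

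\textbf{Choice of shedding faces.} We shed the faces $\sigma_i=\{u_i\}$, or more naturally the single face $\sigma=K\setminus\{v\}$, which has dimension $t-2$.

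\emph{Shedding.} Let $\tau\supseteq\sigma$ be a face and $u\in\sigma$. Since $\tau$ is a face containing $\sigma=K\setminus\{v\}$, we have $v\notin\tau$. We check that $w=v$ works, i.e.\ $(\tau\setminus\{u\})\cup\{v\}$ is $t$-clique-free. Any $t$-clique in this set would have to contain $v$, hence would equal $K$. But $K$ contains $u$, which has been removed, so no such clique exists.

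\emph{Deletion.} $\cf_t(G)\setminus\sigma=\cf_t(G\setminus\sigma)$. The graph $G\setminus\sigma$ is an induced subgraph, hence chordal and $t$-diamond-free (the diamond condition is hereditary for induced subgraphs, since diamonds are defined by cliques). Induction applies.

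\emph{Link.} A set $F$ disjoint from $\sigma$ lies in the link iff $F\cup\sigma$ is $t$-clique-free. This means $F$ is $t$-clique-free in $G\setminus\sigma$, and additionally $F\cup\sigma$ contains no $t$-clique meeting $\sigma$.

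\textbf{The main obstacle: describing the link.} The key step is the extra condition on $t$-cliques meeting $\sigma$. We must exclude $v$, but we must also handle $t$-cliques $C$ with $C\cap\sigma\ne\emptyset$ and $C\not\subseteq K$. Such cliques arise because the $u_i$ may have neighbors outside $K$. So the link is not simply $\cf_t(G\setminus N[v])$.

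\textbf{First attempt: a finer sequence.} I would instead shed a finer sequence: first $\sigma_1=\{u_1,\dots,u_{t-1}\}$, but order the induction on a perfect elimination ordering so that the $u_i$ are handled with the whole clique structure controlled. Alternatively, use Lemma~\ref{decomposition} with faces $\sigma$ being $(t-1)$-subsets, shed one by one. After all such faces are shed, the remaining complex is a join $\cf_t(\cdot)*$ simplex via Proposition~\ref{join}.

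\textbf{Rewriting the link.} Concretely, the link equals the $t$-clique-free complex of a modified "graph" in which cliques through $\sigma$ impose smaller forbidden sets. I would express it as the independence complex of the clutter $\mathcal{CH}_t(G)$ contracted along $\sigma$. Diamond-freeness then says that each $t$-clique meets $\sigma$ in at most $t-2$ vertices outside $K$. Chordality should give a simplicial structure making this contracted complex again of the same type, so that induction applies.

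I expect this link analysis — verifying that the class is closed under the link operation, possibly after strengthening the inductive hypothesis to clutters of cliques of sizes at most $t$ in chordal diamond-free graphs — to be the hardest part. The final shellability claim then follows from the monotonicity of decomposability and the theorem of \cite{J05}.
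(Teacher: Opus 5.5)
There is a genuine gap: the inductive step is never completed. You correctly see that $\link_{\cf_t(G)}(\sigma)$ is not $\cf_t(G\setminus N_G[v])$. However, you only conjecture that some strengthened class of clutters ``should'' be closed under the relevant operations. No class is specified and no closure is checked, so the proposal stops short of a proof.

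Your deletion step is also incorrect. For a shedding face $\sigma$ of positive dimension, the deletion in $t$-decomposability is $\{\tau\in\Delta : \sigma\not\subseteq\tau\}$. This is what the paper's proof actually uses ($\Delta_m$ is the set of faces containing no $\sigma_i$); the ``$H\cap F=\emptyset$'' wording in Section~2 agrees with it only for vertices. If you take faces disjoint from $\sigma$ instead, a face such as $\{v,u_1\}$ lies neither in the deletion nor in $\sigma*\link_{\cf_t(G)}(\sigma)$. The recursion then says nothing about $\cf_t(G)$. With the correct deletion, $v$ is a cone point and the other factor is $\{\tau\in\cf_t(G\setminus v):\sigma\not\subseteq\tau\}$. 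This is the independence complex of $\CH_t(G\setminus v)$ with the $(t-1)$-set $\sigma$ adjoined as an edge. It is not a $t$-clique-free complex of a graph, so your induction hypothesis does not apply to the deletion either.

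Two preliminary steps also need repair.
\begin{enumerate}
\item A simplicial vertex lying in a $t$-clique need not exist. Take a triangle with a pendant edge at each vertex and $t=3$: only the leaves are simplicial. The remedy is the paper's Case~1, since a simplicial vertex in no $t$-clique is a cone point of $\cf_t(G)$.
\item The equality $|N_G[v]|=t$ holds only if ``$t$-diamond-free'' forbids non-induced diamonds. The paper means induced ones: Corollary~\ref{cor:block-graph} covers all block graphs, including $K_{t+1}$, and for $t=3$ the class is exactly the block graphs. So $N_G[x_1]$ can have more than $t$ vertices, your $\sigma=K\setminus\{v\}$ is then not a face, and one must shed all $\binom{k-1}{t-1}$ subsets of $N_G(x_1)$ of size $t-1$, as the paper does.
\end{enumerate}

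The obstacle you isolated is real, and it is exactly where the paper's own argument is deficient, so you cannot close it by following the paper. The paper asserts $\link_{\Delta_{i-1}}(\sigma_i)=\cf_t(G\setminus N_G[x_1])$ and $\Delta_m=\Delta'*\cf_t(G\setminus N_G[x_1])$. Its only justification is that the $t$-cliques containing all of $\sigma_i$ lie in $N_G[x_1]$; $t$-cliques meeting $\sigma_i$ in a proper nonempty subset are ignored.

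A counterexample: take the block graph formed by the triangles $x_1x_2x_3$ and $x_2y_1y_2$, with $t=3$ and $\sigma_1=\{x_2,x_3\}$. The link is $\{\emptyset,\{y_1\},\{y_2\}\}$, not the full simplex on $\{y_1,y_2\}$. Moreover, $\{x_1,x_2,y_1,y_2\}\in\Delta'*\cf_3(G\setminus N_G[x_1])$ contains the triangle $x_2y_1y_2$, so it is not a face.

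What is missing, in your proposal and in the paper, is an inductive statement about a class of clutters rather than graphs. That class must contain the contractions of $\CH_t(G)$ along such faces and the clutters with such subedges adjoined, and it must be provably closed under both link and deletion. One possible route is to show that the relevant minors of $\CH_t(G)$ always have a simplicial vertex, in the spirit of the argument the paper uses for clique whiskerings in Section~4.
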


\begin{proof}
We prove the theorem by induction on the number of vertices of $G$.
If $|V(G)| = 3$, then $G$ is trivially chordal and contains no $t$-cliques for $t > 3$. Therefore, $\cf_t(G)$ is $(t-2)$-decomposable by direct inspection.
Now assume $|V(G)| \geq 4$, and that the statement holds for all $t$-diamond-free chordal graphs with fewer vertices. Since $G$ is chordal, it contains a simplicial vertex, say $x_1$. Let $N_G[x_1] = \{x_1, x_2, \ldots, x_k\}$.

\vskip 1mm
\noindent
\textsc{Case 1:} $k < t$.

Let $\{x_1, x_2, \ldots, x_p\} \subseteq N_G[x_1]$ be the simplicial vertices of $G$. Consider the induced subgraphs
\[
G_1 = G[\{x_1, \ldots, x_p\}] \quad \text{and} \quad G_2 = G[V(G) \setminus \{x_1, \ldots, x_p\}].
\]
Since $V(G_1) \cap V(G_2) = \emptyset$, it follows from Proposition~\ref{join} that
$\cf_t(G_1 \cup G_2) = \cf_t(G_1) * \cf_t(G_2).$

We claim that $\cf_t(G) = \cf_t(G_1 \cup G_2)$. Suppose not. Then there exists a face $f \in \cf_t(G_1 \cup G_2)$ such that $f \notin \cf_t(G)$. This implies that $f$ contains a $t$-clique in $G$ which is not present in either $G_1$ or $G_2$ individually. However, since $|V(G_1)| = p < k < t$, $G_1$ cannot contain a $t$-clique. Hence, any $t$-clique in $f$ must lie entirely within $G_2$, which contradicts the assumption that $f \notin \cf_t(G)$. Therefore, the claim follows.
Both $G_1$ and $G_2$ are induced subgraphs of $G$, and hence are also $t$-diamond-free and chordal. By the induction hypothesis, both $\cf_t(G_1)$ and $\cf_t(G_2)$ are $(t{-}2)$-decomposable. Then by \cite[Proposition 3.8]{Russ11}, their join $\cf_t(G) = \cf_t(G_1) * \cf_t(G_2)$ is also $(t{-}2)$-decomposable.

\vskip 1mm
\noindent
\textsc{Case 2:} $k \geq t$.  
Define  
\[
S = \left\{ \sigma \subseteq \{x_2, \ldots, x_k\} \,\middle|\, |\sigma| = t - 1 \right\} = \{\sigma_1, \ldots, \sigma_m\}, 
\]  
where $m = \binom{k - 1}{t - 1}$. Note that each $\sigma_i \in \cf_t(G)$ for $1 \leq i \leq m$.

Let $\Delta_0 := \cf_t(G)$. For each $1 \leq i \leq m$, define
\[
\Delta_i := \Delta_{i-1} \setminus \sigma_i, ~
\Omega_i := \operatorname{link}_{\Delta_{i-1}}(\sigma_i).
\]
We claim that each $\sigma_i$ is a shedding face of $\Delta_{i-1}$. Let $\tau \in \operatorname{star}(\Delta_{i-1}, \sigma_i)$, i.e., $\tau = \sigma_i \cup K$ for some $K \subseteq V(G)$ such that no vertex of $K$ lies in $N_G[x_1]$. In particular, $x_1 \notin \tau$.

For any $x \in \sigma_i$, consider the set $(\tau \cup \{x_1\}) \setminus \{x\}$. Since $\tau$ is $t$-clique-free and $x_1$ is not adjacent to any vertex in $K$, this set remains $t$-clique-free. Therefore, $(\tau \cup \{x_1\}) \setminus \{x\} \in \Delta_0$. Since $\Delta_{i-1}$ contains all faces of $\Delta_0$ containing $x_1$, this implies $(\tau \cup \{x_1\}) \setminus \{x\} \in \Delta_{i-1}$. Hence, $\sigma_i$ is a shedding face of $\Delta_{i-1}$. Moreover, $\dim \sigma_i = t - 2$ for all $1 \leq i \leq m$.

Next, we show that
$\operatorname{link}_{\Delta_{i-1}}(\sigma_i) = \cf_t(G \setminus N_G[x_1])$ for all  $1 \leq i \leq m.$
Let $\tau \in \cf_t(G \setminus N_G[x_1])$. Then $\tau \cap \sigma_i = \emptyset$ and $\tau \cup \sigma_i$ is $t$-clique-free in $G$ because $|\sigma_i| = t - 1$ and $G$ is $t$-diamond-free, so all $t$-cliques containing $\sigma_i$ must be in the induced subgraph $G[N_G[x_1]]$. Thus, $\tau \cup \sigma_i \in \Delta_0$, and since $\tau$ is disjoint from $N_G[x_1]$ and $\sigma_i \subseteq N_G[x_1]$, it follows that $\tau \cup \sigma_i \in \Delta_{i-1}$. Hence, $\tau \in \operatorname{link}_{\Delta_{i-1}}(\sigma_i)$.

Conversely, if $\tau \in \operatorname{link}_{\Delta_{i-1}}(\sigma_i)$, then $\tau \cup \sigma_i \in \Delta_{i-1} \subseteq \Delta_0$ and is $t$-clique-free. Since $\tau \cap \sigma_i = \emptyset$ and $\sigma_i \subseteq N_G[x_1]$, it follows that $\tau$ is disjoint from $N_G[x_1]$, and hence $\tau \in \cf_t(G \setminus N_G[x_1])$. This completes the claim.

Now $G \setminus N_G[x_1]$ must be $t$-diamond-free and chordal. Then by induction, $\cf_t(G \setminus N_G[x_1])$ is $(t{-}2)$-decomposable. Since each $\Omega_i$ is isomorphic to this complex, they are all $(t{-}2)$-decomposable.

Now consider the set  
\[
S' = \left\{ X \subseteq \{x_1, \ldots, x_k\} \;\middle|\; X = \{x_1\} \cup X',\; X' \subseteq \{x_2,\ldots,x_k\},\; |X'| = t - 2 \right\}.
\]  
This set can be enumerated as \( S' = \{X_1, \ldots, X_\ell\} \), where \( \ell = \binom{k - 1}{t - 2} \). Let $\Delta'$ be the simplicial complex with facets $X_1, \ldots, X_\ell$. Note that $\Delta' = \{x_1\} * \Delta''$, where $\Delta''$ is the $(t-3)$-skeleton of the simplex on $\{x_2, \ldots, x_k\}$. Hence, by \cite[Proposition 3.8 and Lemma 3.10]{Russ11}, $\Delta'$ is $(t{-}2)$-decomposable.

We now claim that  
$\Delta_m = \Delta' * \cf_t(G \setminus N_G[x_1]).$
Let $f = f_1 \cup f_2 \in \Delta' * \cf_t(G \setminus N_G[x_1])$. Then $\sigma_i \nsubseteq f_1$, so $\sigma_i \nsubseteq f$ for all $1 \leq i \leq m$. Thus, $f \in \Delta_m$.

Conversely, let $f \in \Delta_m$. Define $f_1 := f \cap N_G[x_1]$ and $f_2 := f \setminus f_1$. Since $f$ is $t$-clique-free, we must have $|f_1| \leq t - 1$. Moreover, since $\sigma_i \nsubseteq f$, $f_1$ cannot contain any $(t{-}1)$-subset of $\{x_2, \ldots, x_k\}$, and hence $f_1 \in \Delta'$. Clearly, $f_2 \in \cf_t(G \setminus N_G[x_1])$. Therefore, $f \in \Delta' * \cf_t(G \setminus N_G[x_1])$.

Hence, $\Delta_m$ is the join of two $(t{-}2)$-decomposable complexes, and thus $(t{-}2)$-decomposable. By Lemma~\ref{decomposition}, it follows that $\cf_t(G) = \Delta_0$ is also $(t{-}2)$-decomposable.
\end{proof}

A graph $G$ is a \emph{block graph} if every \emph{block} (a maximal connected subgraph that remains connected upon the removal of any single vertex) is a complete graph. Since block graphs are $t$-diamond-free and chordal for all $t \geq 3$, we obtain the following immediate consequence of Theorem~\ref{main}:
\begin{corollary}\label{cor:block-graph}
For any block graph $G$, the complex $\cf_t(G)$ is $(t-2)$-decomposable and hence shellable.
\end{corollary}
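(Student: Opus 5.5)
The corollary should follow from Theorem~\ref{main} once we check its two hypotheses for a block graph $G$ and every $t \ge 3$: that $G$ is chordal and that $G$ is $t$-diamond-free. The conclusion is then read off directly, with the case $t = 2$ (if intended) handled separately by the known result of Dochtermann--Engstr\"om and Woodroofe that $\ind(G)$ is vertex decomposable, i.e.\ $0$-decomposable, for chordal $G$.

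\textbf{Chordality.} Let $C$ be an induced cycle of length at least $4$ in $G$. A cycle is $2$-connected, so $C$ lies inside a single block $B$. Since $B$ is complete, $C$ has chords, contradicting that it is induced. So every induced cycle is a triangle.

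\textbf{$t$-diamond-freeness.} Suppose $G$ contains two distinct $t$-cliques $Q_1$ and $Q_2$ with $|Q_1 \cap Q_2| = t-1$, and write $Q_1 \setminus Q_2 = \{a\}$ and $Q_2 \setminus Q_1 = \{b\}$. Each clique with at least two vertices is $2$-connected, so it lies in a single block. Because $t \ge 3$, the two cliques share at least two vertices, and two distinct blocks meet in at most one cut vertex. Hence $Q_1 \cup Q_2$ lies in one block $B$. As $B$ is complete, $a$ and $b$ are adjacent, so $Q_1 \cup Q_2$ induces a $K_{t+1}$ rather than a $t$-diamond.

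This last point is the one delicate step. If ``$t$-diamond-free'' means no induced $t$-diamond, the argument above is exactly what is needed. If it means no $t$-diamond as a (not necessarily induced) subgraph, the claim fails as soon as $G$ contains a $K_{t+1}$. I would check how the proof of Theorem~\ref{main} uses the hypothesis. There it is applied to guarantee that every $t$-clique containing a $(t-1)$-set $\sigma_i \subseteq N_G[x_1]$ lies in $G[N_G[x_1]]$. For block graphs this follows directly: such a clique and $N_G[x_1]$ share at least $t-1 \ge 2$ vertices, so both lie in the same block, and since $x_1$ is simplicial that block is $N_G[x_1]$. Thus, under either reading, the mechanism of the proof of Theorem~\ref{main} applies verbatim to block graphs.

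With both hypotheses in hand, Theorem~\ref{main} gives that $\cf_t(G)$ is $(t-2)$-decomposable. Since $(t-2)$-decomposability implies $d$-decomposability for $d = \dim \cf_t(G) \ge t-2$ (monotonicity, noting the dimension is at least $t-2$ whenever $G$ has a $(t-1)$-set), the theorem of \cite{J05} yields shellability.
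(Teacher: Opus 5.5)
\textbf{There is a genuine gap: the route through Theorem~\ref{main} does not establish the corollary.} Your reduction is the paper's own one-line argument: block graphs are chordal and $t$-diamond-free, so Theorem~\ref{main} applies. Your chordality check and your induced-diamond check are correct. You are also right that under the non-induced reading any block $K_{t+1}$ already violates the hypothesis.

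The gap is in the step meant to rescue the argument. The proof of Theorem~\ref{main} needs more than ``every $t$-clique containing $\sigma_i$ lies in $G[N_G[x_1]]$.'' To get $\operatorname{link}_{\Delta_{i-1}}(\sigma_i)=\cf_t(G\setminus N_G[x_1])$, one must also rule out $t$-cliques that meet $\sigma_i$ in a nonempty \emph{proper} subset and otherwise lie outside $N_G[x_1]$. In a block graph such cliques occur through cut vertices.

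Take two triangles $x_1x_2x_3$ and $x_3y_1y_2$ glued at $x_3$, with $t=3$. Then $x_1$ is simplicial and $\sigma_1=\{x_2,x_3\}$ lies only in the triangle $N_G[x_1]$, so your criterion holds. Yet $\{y_1,y_2\}\in\cf_3(G\setminus N_G[x_1])$ is not in the link, since $\{x_2,x_3,y_1,y_2\}\supseteq\{x_3,y_1,y_2\}$. Likewise $\{x_1,x_3,y_1,y_2\}$ lies in $\Delta'*\cf_3(G\setminus N_G[x_1])$ but not in $\cf_3(G)$, so the join description of $\Delta_m$ fails too.

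Nor can you simply cite Theorem~\ref{main} under the induced reading, because it is false there. Take $G_{6,4}$ from Example~\ref{not-shellable}. It is chordal, and its $4$-cliques are the $4$-subsets of $\{y_1,\dots,y_6\}$ and the sets $\{y_i,y_{i+1},x_{i,1},x_{i,2}\}$. No two of these share three vertices while their remaining vertices are non-adjacent, so $G_{6,4}$ has no induced $4$-diamond. Yet by that Example (with $n=6$, $t=4>\lceil 6/2\rceil$), $\cf_4(G_{6,4})$ is not sequentially Cohen--Macaulay. So your proof and the paper's one-line derivation share the same flaw.

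\textbf{A direct argument that works.} The corollary itself is true. In a block graph every clique lies in a block, so $\cf_t(G)=\{F:\ |F\cap B|\le t-1 \text{ for every block } B\}$. Prove more generally that $\Delta=\{F:\ |F\cap B|\le s_B \text{ for all blocks } B\}$ is vertex decomposable for arbitrary thresholds $s_B\ge 0$. Discard vertices of blocks with $s_B=0$, and induct on $|V(G)|$.

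Disconnected graphs give joins, and a single block gives a skeleton of a simplex. Otherwise pick a leaf block $B$ with cut vertex $c$, and put $A=B\setminus\{c\}$, whose vertices lie in no other block. Write $2^{A}$ for the full simplex on $A$ and $\partial 2^{A}$ for its boundary. There are three cases.
\begin{itemize}
\item If $|B|\le s_B$, then $\Delta=\Delta(G\setminus A)*2^{A}$.
\item If $|B|\ge s_B+2$, any $x\in A$ is a shedding vertex. Every facet $F$ of $\operatorname{del}_\Delta(x)$ has $|F\cap B|=s_B$: otherwise maximality forces $A\setminus\{x\}\subseteq F$, and $|A\setminus\{x\}|\ge s_B$. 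Hence $F\cup\{x\}\notin\Delta$. The deletion and link are threshold complexes on $G\setminus x$, with $s_B$ unchanged and lowered by one respectively.
\item If $|B|=s_B+1$, set $\Delta'=\Delta(G\setminus A)$. Then $\operatorname{del}_\Delta(c)=\operatorname{del}_{\Delta'}(c)*2^{A}$ and $\operatorname{link}_\Delta(c)=\operatorname{link}_{\Delta'}(c)*\partial 2^{A}$. Here $c$ is shedding, because every facet of the deletion contains $A$ while no face of the link does.
\end{itemize}
This gives vertex decomposability. By monotonicity that yields $(t-2)$-decomposability, and hence shellability.
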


The following corollary is an immediate consequence of Lemma 3.6 in \cite{fv} and Theorem~\ref{main}: Let $I$ be a squarefree monomial ideal. Then the quotient ring $\K[x_1,\ldots,x_n]/I$ is Cohen-Macaulay if and only if it is sequentially Cohen-Macaulay and the ideal $I$ is unmixed.

\begin{corollary}
Let $G$ be a $t$-diamond-free chordal graph. Then the edge ideal $I(\mathcal{CH}_t(G))$ is Cohen-Macaulay if and only if $I(\mathcal{CH}_t(G))$ is unmixed.
\end{corollary}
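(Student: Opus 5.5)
The corollary is obtained by chaining three facts: Theorem~\ref{main}, the shellable $\Rightarrow$ sequentially Cohen-Macaulay implication, and the criterion from \cite[Lemma 3.6]{fv} quoted just before the statement.

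First, identify the ring. By Proposition~\ref{ind}, $\ind(\CH_t(G)) = \cf_t(G)$. The edge ideal $I(\CH_t(G))$ is generated by the monomials of the $t$-cliques of $G$. These are exactly the minimal non-faces of $\cf_t(G)$: a set is a non-face iff it contains a $t$-clique, and the minimal such sets are the $t$-cliques themselves. Hence $I(\CH_t(G)) = I_{\cf_t(G)}$, and $\K[x_1,\ldots,x_n]/I(\CH_t(G))$ is the Stanley-Reisner ring of $\cf_t(G)$.

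Second, show this ring is sequentially Cohen-Macaulay.
\begin{itemize}
\item For $t \ge 3$, Theorem~\ref{main} says $\cf_t(G)$ is $(t-2)$-decomposable, hence shellable. A (possibly nonpure) shellable complex is sequentially Cohen-Macaulay over every field, by the standard result of Stanley/Bj\"orner-Wachs recorded in the hierarchy in the introduction.
\item The statement does not fix $t\ge 3$. If $t=2$ is intended, the diamond-free hypothesis is vacuous on chordal graphs: a $2$-diamond is just a path on three vertices, and $G$ is still chordal. In that case I would invoke the Dochtermann-Engstr\"om/Woodroofe result cited in the introduction, that $\ind(G)$ is vertex decomposable for chordal $G$.
\end{itemize}

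Third, apply \cite[Lemma 3.6]{fv} to the squarefree monomial ideal $I = I(\CH_t(G))$. It says $R/I$ is Cohen-Macaulay iff $R/I$ is sequentially Cohen-Macaulay and $I$ is unmixed. Since sequential Cohen-Macaulayness has just been established, the equivalence reduces to: Cohen-Macaulay iff unmixed.

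For the forward direction, Cohen-Macaulay rings are always unmixed, so no hypothesis on $G$ is needed. I would also note that unmixedness of $I_\Delta$ is the same as purity of $\Delta$, which recovers the familiar fact that pure shellable complexes are Cohen-Macaulay.

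There is no real obstacle. The only points needing care are the identification of $I(\CH_t(G))$ with the Stanley-Reisner ideal of $\cf_t(G)$, and the range of $t$.
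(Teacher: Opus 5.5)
Your proposal follows the paper's own derivation: Theorem~\ref{main} gives shellability of $\cf_t(G)$, hence sequential Cohen--Macaulayness of its Stanley--Reisner ring $\K[x_1,\ldots,x_n]/I(\CH_t(G))$, and then \cite[Lemma~3.6]{fv} reduces Cohen--Macaulayness to unmixedness; your explicit identification $I(\CH_t(G))=I_{\cf_t(G)}$ and your separate handling of $t=2$, which Theorem~\ref{main} (stated for $t\ge 3$) does not cover, are welcome additions. One small correction: for $t=2$ the diamond-free hypothesis is not vacuous, since a path on three vertices is chordal yet is itself a $2$-diamond — the hypothesis is simply unnecessary, because $\ind(G)$ is vertex decomposable for every chordal $G$.
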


The following construction demonstrates that there exist chordal graphs whose $t$-clique-free complexes are not sequentially Cohen-Macaulay (and hence not shellable) for certain parameters.

\begin{example}\label{not-shellable}
Let $K_n$ be the complete graph on $n$ vertices with vertex set  
$V(K_n) = \{y_1, y_2, \ldots, y_n\}.$
We construct a graph $G_{n,t}$ by augmenting $K_n$ as follows:  
For each $1 \leq i \leq n$, introduce $t - 2$ new vertices  
$\{x_{i,1}, x_{i,2}, \ldots, x_{i,t-2}\}, $
so that the vertex set of $G_{n,t}$ becomes  
\[ V(G_{n,t}) = V(K_n) \cup \bigcup_{i=1}^n \{x_{i,1}, x_{i,2}, \ldots, x_{i,t-2}\}. \]  

The edge set of $G_{n,t}$ consists of:  
\begin{enumerate}
    \item All edges of $K_n$, and  
    \item For each $i$, a $t$-clique formed by $\{y_i, y_{i+1}, x_{i,1}, \ldots, x_{i,t-2}\}$, where indices are taken modulo $n$.
\end{enumerate}  
Thus,  
$E(G_{n,t}) = E(K_n) \cup \bigcup_{i=1}^n \left\{ \{x, y\} \mid x, y \in \{y_i, y_{i+1}, x_{i,1}, \ldots, x_{i,t-2}\}, x \neq y \right\}.$

Assume $n \neq 3,5$ and $t > \lceil \frac{n}{2} \rceil$. We show that the $t$-clique-free complex $\cf_t(G_{n,t})$ is not sequentially Cohen-Macaulay.  
Suppose, for contradiction, that $\cf_t(G_{n,t})$ is sequentially Cohen-Macaulay. By \cite[Theorem 3.30]{J08}, the link of any face in $\cf_t(G_{n,t})$ must also be sequentially Cohen-Macaulay.  

Consider the face  
$\sigma = \{x_{i,j} \mid 1 \leq i \leq n,\ 1 \leq j \leq t-2\},$
which consists of all the newly added vertices. A key observation is that  
$\link_{\cf_t(G_{n,t})}(\sigma) = \ind(C_n),$
where $\ind(C_n)$ is the independence complex of the $n$-cycle.  
However, by \cite[Theorem 10]{russ}, $\ind(C_n)$ is \textit{not} sequentially Cohen-Macaulay for $n \neq 3,5$, yielding a contradiction.  
Thus, $\cf_t(G_{n,t})$ fails to be sequentially Cohen-Macaulay for $t > \lceil \frac{n}{2} \rceil$.  
\end{example}

\section{Shellability Preservation under Clique Attachments and Whiskerings}
\label{constructions}

In this section, we study the preservation of shellability of clique-free
complexes under various graph operations. We begin by introducing the clique
attachment construction $\cl(H,S,t)$ (Construction~\ref{cons:graph-attachment})
and establish a key structural result: the clique-free complex $\cf_t(G)$ is
shellable if and only if $\cf_t(H \setminus S)$ is shellable
(Theorem~\ref{she-extension}). This result generalizes and unifies earlier work
on shellability (see Corollary~\ref{cor:FH-VanVilla}) and yields new sufficient
conditions for shellability (Corollary~\ref{cor:clique-ext}). We then investigate
clique whiskerings $G(\Pi,t)$, showing that their associated clique-free
complexes are pure (Theorem~\ref{pure}), $(t-2)$-decomposable
(Theorem~\ref{clique-vertex}), and Cohen--Macaulay
(Corollary~\ref{CM}).

\begin{cons}\label{cons:graph-attachment}
Let $H$ be a graph and $S \subseteq V(H)$. We define the graph $G = \mathrm{Cl}(H, S, t)$ as follows:
\begin{itemize}
    \item For each vertex $v \in S$, attach a new clique $K_v$ of size at least $t$, where $v \in V(K_v)$ (i.e., $K_v$ contains $v$ and $|V(K_v)| \geq t$).
    \item $K_v$ and $K_u$ does not share any vertex for $u \neq v$.
    \item The sizes of the attached cliques $\{K_v\}_{v \in S}$ may vary (i.e., $|V(K_v)|$ is independent of $|V(K_u)|$ for $u \neq v$).
\end{itemize}
All attachments are performed simultaneously, meaning the resulting graph $G$ is obtained by adding all cliques $\{K_v\}_{v \in S}$ to $H$ at once.
\end{cons}

We now establish one of the main structural results of this section: a complete characterization of shellability preservation under clique attachments.

\begin{theorem}\label{she-extension}
Let $G = \cl(H, S, t)$, where $H$ is a graph, $S \subseteq V(H)$, and $t \geq 2$. Then  $\cf_t(H \setminus S)$ is shellable if and only if $\cf_t(G)$ is shellable.
\end{theorem}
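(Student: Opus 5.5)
The plan is to fix notation and describe the facets of $\cf_t(G)$ explicitly, then treat the two directions separately. The "only if" direction comes from a link computation. The "if" direction needs an explicit shelling order, and that is where the real work lies.

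\emph{Notation.} For $v\in S$ put $A_v:=V(K_v)\setminus\{v\}$, so $|A_v|\ge t-1$. The sets $A_v$ are pairwise disjoint, disjoint from $V(H)$, and each vertex of $A_v$ is adjacent only to $V(K_v)$. Hence the $t$-cliques of $G$ are exactly the $t$-cliques of $H$ together with the $t$-subsets of the various $V(K_v)$. A set $F$ is therefore a face of $\cf_t(G)$ iff $F\cap V(H)\in\cf_t(H)$ and $|F\cap V(K_v)|\le t-1$ for every $v\in S$. From this I would read off the facets. Write $T:=F\cap V(H\setminus S)$ and $U:=F\cap S$. A facet $F$ has $|F\cap A_v|=t-1$ when $v\notin F$, and $|F\cap A_v|=t-2$ when $v\in F$; otherwise a further vertex of $A_v$ could be added. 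No vertex $u\in S\setminus U$ can ever be added, since $u$ already completes a $t$-clique with $F\cap A_u$. Hence $F$ is a facet iff the $A_v$-parts have the sizes above and $T$ is maximal among subsets $T'$ of $V(H\setminus S)$ with $T'\cup U\in\cf_t(H)$.

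\emph{The "only if" direction.} Choose, for each $v\in S$, a $(t-1)$-subset $B_v\subseteq A_v$, and let $\sigma:=\bigcup_{v\in S}B_v$, which is a face of $\cf_t(G)$. I claim $\link_{\cf_t(G)}(\sigma)=\cf_t(H\setminus S)$. If $\tau$ is in the link, then $\tau$ contains no $v\in S$, since $B_v\cup\{v\}$ is a $t$-clique. It also contains no further vertex of $A_v$, since that would give $t$ vertices of $K_v$. So $\tau\subseteq V(H\setminus S)$ and $\tau$ is $t$-clique-free in $H\setminus S$. Conversely, any such $\tau$ can be added to $\sigma$, because the $B_v$ are not adjacent to $V(H\setminus S)$. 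Links of faces in a shellable complex are shellable (Bj\"orner--Wachs), so shellability of $\cf_t(G)$ gives shellability of $\cf_t(H\setminus S)$.

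\emph{The "if" direction.} Fix a shelling order of $\cf_t(H\setminus S)$. I would order the facets of $\cf_t(G)$ first by $|U|$ increasing. The block with $U=\emptyset$ equals $\cf_t(H\setminus S)*\Gamma$, where $\Gamma$ is the join over $v\in S$ of the $(t-2)$-skeleta of the simplices on $A_v$. This block is a join of shellable complexes, hence shellable, and comes first. For a later facet $F$ with $v\in U$ and an earlier facet $F_i$, I would use the swap $F':=(F\setminus\{v\})\cup\{a\}$ with $a\in A_v\setminus F$. This $F'$ is always a face with smaller $U$, and typically a facet or contained in one with $U'\subseteq U\setminus\{v\}$. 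It has $F\setminus F'=\{v\}$, which supplies the exchange vertex whenever $v\notin F_i$. When every $v\in U$ lies in $F_i$, the exchange must come from the $A_v$-parts, via the shellings of the skeleta, or from $T$. Within a fixed $U$ I would order by the $A_v$-patterns and then by the order on $T$ induced from the shelling of the relevant link in $H\setminus S$.

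\emph{Main obstacle.} The hard step is the within-block ordering. The $T$-parts of facets with given $U$ are the facets of $\{T':T'\cup U\in\cf_t(H)\}$, which need not be a link in $\cf_t(H\setminus S)$. I would first try to avoid needing its shellability directly. The idea is to show that $F'$ above is itself a facet, because vertices of $A_v$ are blocked only inside $K_v$. Then every facet with $U\ne\emptyset$ reduces by single swaps $v\mapsto a$ to the $U=\emptyset$ block. So the only cross-block exchanges needed are these swaps, and the condition "$\ell<j$" is guaranteed by the $|U|$-ordering. If this fails, the fallback is a shedding-face sequence in the spirit of Lemma~\ref{decomposition}, removing the faces $\{v\}\cup B$ with $B\subseteq A_v$ and $|B|=t-2$.
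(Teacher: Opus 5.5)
Your ``only if'' direction is correct and matches the paper's argument: the face $\sigma=\bigcup_{v\in S}B_v$ has link $\cf_t(H\setminus S)$, and links of shellable complexes are shellable. Your facet description and the cross-block swap also work. $F'=(F\setminus\{v\})\cup\{a\}$ need not be a facet, but every facet $F''\supseteq F'$ has $F''\cap S=U\setminus\{v\}$ and $F\setminus F''=\{v\}$.

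The gap is the within-block step you flagged, and your workaround does not handle it. Because you order by $|U|$, an earlier facet $F_i$ containing $U=F_j\cap S$ must satisfy $F_i\cap S=U$. For such pairs the swap vertex $v$ lies in $F_i$, so the exchange must come from inside the block. That requires shellability of $L_U=\{T'\subseteq V(H\setminus S): T'\cup U\in\cf_t(H)\}$. No ordering can avoid this, because $L_U$ is itself a link in $\cf_t(G)$. It is the link of the face formed by $U$, $t-2$ vertices of each $A_v$ with $v\in U$, and $t-1$ vertices of each $A_u$ with $u\in S\setminus U$. So shellability of $\cf_t(G)$ forces every $L_U$ to be shellable, while the hypothesis only controls $L_\emptyset=\cf_t(H\setminus S)$.

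Consequently the ``if'' direction of the statement is false.
\begin{itemize}
\item For $t=2$: let $H$ be a $4$-cycle $y_1y_2y_3y_4$ with a path $y_1,w,v$ attached, $S=\{v\}$, and $K_v=\{v,v'\}$. The facets $\{w,y_2,y_4\},\{w,y_3\},\{y_1,y_3\}$ of $\ind(H\setminus v)$ form a shelling in this order. But $\link_{\cf_2(G)}(\{v\})=\ind(C_4)$ is two disjoint edges, so $\cf_2(G)$ is not shellable.
\item For $t=3$: let $H$ be the wheel with hub $v$ over a $4$-cycle, $S=\{v\}$, and $K_v=\{v,a_1,a_2\}$. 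Then $\cf_3(H\setminus v)$ is a simplex, while $\link_{\cf_3(G)}(\{v,a_1\})=\ind(C_4)$.
\end{itemize}

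The paper's proof overlooks the same point. It asserts $\link_{\Delta_{i-1}}(\sigma_i)=\cf_t(G\setminus N_G[x_1])$ for all $i$. This fails whenever $v=x_k\in\sigma_i$, since $v$ has neighbors in $H$; in the first example, $\sigma_1=\{v\}$. Your fallback of shedding the faces $\{v\}\cup B$ runs into exactly these links.

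Your ordering (blocks by $|U|$, swaps between blocks, a lexicographic join shelling inside each block) does prove a corrected statement: $\cf_t(G)$ is shellable if and only if $L_U$ is shellable for every $U\subseteq S$ that is $t$-clique-free in $H$.
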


\begin{proof}
Assume  $\cf_t(H \setminus S)$ is shellable.
We proceed by induction on $|S|$.
If $|S| = 0$, then $G = H = H \setminus S$, so the conclusion holds trivially. 
Now suppose the statement holds for all sets $S$ of size $n$. Let $|S| = n+1$ and pick a vertex $v \in S$. Let $V(K_v) = \{x_1, x_2, \ldots, x_k\}$ with $x_k = v$ and $k \geq t$.
Define the collection of all $(t-1)$-subsets of $\{x_2, \ldots, x_k\}$:
\[
\mathcal{S} = \left\{ \sigma \subseteq \{x_2, \ldots, x_k\} \mid |\sigma| = t - 1 \right\} = \{\sigma_1, \ldots, \sigma_m\},
\]
where $m = \binom{k - 1}{t - 1}$. Each $\sigma_i$ belongs to $\cf_t(G)$ since they avoid forming a $t$-clique.

Let $\Delta_0 := \cf_t(G)$. For $1 \leq i \leq m$, define:
\[
\Delta_i := \Delta_{i-1} \setminus \sigma_i, ~
\Omega_i := \operatorname{link}_{\Delta_{i-1}}(\sigma_i).
\]

We claim that each $\sigma_i$ is a shedding face of $\Delta_{i-1}$. Indeed, for any $\tau \in \operatorname{star}(\Delta_{i-1}, \sigma_i)$, write $\tau = \sigma_i \cup K$ where $K$ contains no vertex of $N_G[x_1]$. Then for any $x \in \sigma_i$, the set $(\tau \cup \{x_1\}) \setminus \{x\}$ remains $t$-clique-free: $x_1$ is not adjacent to any vertex in $K$, and $\tau$ is $t$-clique-free by assumption. Hence, $(\tau \cup \{x_1\}) \setminus \{x\} \in \Delta_{0}$, and thus belongs to $\Delta_{i-1}$, proving that $\sigma_i$ is a shedding face of $\Delta_{i-1}$.

Next, we show:
$\operatorname{link}_{\Delta_{i-1}}(\sigma_i) = \cf_t(G \setminus N_G[x_1]).$
First, if $\tau \in \cf_t(G \setminus N_G[x_1])$, then $\tau \cap \sigma_i = \emptyset$ and all $t$-cliques containing $\sigma_i$ lie within $G[N_G[x_1]]$. Thus, $\tau \cup \sigma_i$ is $t$-clique-free in $G$, so $\tau \in \operatorname{link}_{\Delta_{i-1}}(\sigma_i)$. Conversely, any $\tau$ in the link satisfies that $\tau \cup \sigma_i$ is $t$-clique-free, and since $\sigma_i \subseteq N_G[x_1]$, it follows that $\tau$ is disjoint from $N_G[x_1]$, hence $\tau \in \cf_t(G \setminus N_G[x_1])$.
Observe that:
$G \setminus N_G[x_1] = \cl(H \setminus v, S \setminus \{v\}, t).$
Since $|S \setminus \{v\}| = n$ and
$(H \setminus v) \setminus (S \setminus \{v\}) = H \setminus S,$
the induction hypothesis gives that $\cf_t(G \setminus N_G[x_1])$ is shellable.

Now define:
\[
\mathcal{S}' = \left\{ X \subseteq \{x_1, \ldots, x_k\} \mid X = \{x_1\} \cup X',\; X' \subseteq \{x_2,\ldots,x_k\},\; |X'| = t - 2 \right\}.
\]
Let $\Delta'$ be the simplicial complex whose facets are the elements of $\mathcal{S}'$. Then $\Delta' = \{x_1\} * \Delta''$, where $\Delta''$ is the $(t-3)$-skeleton of the simplex on $\{x_2, \ldots, x_k\}$. By \cite[Proposition 3.8 and Lemma 3.10]{Russ11}, $\Delta'$ is shellable.

We now claim:
$\Delta_m = \Delta' * \cf_t(G \setminus N_G[x_1]).$
Let $f = f_1 \cup f_2 \in \Delta' * \cf_t(G \setminus N_G[x_1])$. Then for all $i$, $\sigma_i \nsubseteq f_1$, so $\sigma_i \nsubseteq f$, hence $f \in \Delta_m$. Conversely, if $f \in \Delta_m$, let $f_1 = f \cap N_G[x_1]$ and $f_2 = f \setminus f_1$. Since $f$ is $t$-clique-free and none of the $\sigma_i$ are subsets of $f_1$, we have $f_1 \in \Delta'$. Also, $f_2 \in \cf_t(G \setminus N_G[x_1])$. Thus, $f \in \Delta' * \cf_t(G \setminus N_G[x_1])$.

Finally, since both $\Delta'$ and $\cf_t(G \setminus N_G[x_1])$ are shellable by the induction hypothesis, their join $\Delta_m$ is shellable by properties of shellable complexes. Through iterative application of \cite[Lemma 3.4]{Russ11}, we obtain in sequence that $\Delta_{m-1}$, $\Delta_{m-2}$, and continuing to $\Delta_0 = \cf_t(G)$ are all shellable.

Suppose $\cf_t(G)$ is shellable.  Let $S = \{v_1, v_2, \ldots, v_n\} \subseteq V(H)$. For each $i$, choose a subset $\sigma_i \subset V(K_{v_i} \setminus \{v_i\})$ with $|\sigma_i| = t - 1$, and define $\sigma = \bigcup_{i=1}^n \sigma_i$. By the construction of $G = \cl(H, S, t)$, it follows that $\sigma \in \cf_t(G)$.

    We claim that 
    $\operatorname{link}_{\cf_t(G)}(\sigma) = \cf_t(H \setminus S).$ 
    First, let $f \in \operatorname{link}_{\cf_t(G)}(\sigma)$. Then $f \cup \sigma \in \cf_t(G)$ and $f \cap \sigma = \emptyset$. Since $\sigma$ is supported entirely in the added cliques and $f$ is disjoint from $\sigma$, it must be that $f \in \cf_t(H \setminus S)$.

    Conversely, suppose $g \in \cf_t(H \setminus S)$. Then:
    \begin{enumerate}
        \item $g \cap \sigma = \emptyset$, since the support of $\cf_t(H \setminus S)$ lies entirely outside the added clique vertices used to form $\sigma$;
        \item $g \cup \sigma \in \cf_t(G)$, because both $g$ and $\sigma$ belong to $\cf_t(G)$ and are disjoint.
    \end{enumerate}
    Thus, $g \in \operatorname{link}_{\cf_t(G)}(\sigma)$.

    Therefore, $\operatorname{link}_{\cf_t(G)}(\sigma) = \cf_t(H \setminus S)$, which is shellable by \cite[Theorem 3.30]{J08}. It follows that $\cf_t(H\setminus S)$ is shellable.
\end{proof}
As a direct consequence of Theorem~\ref{she-extension}, we recover the results of Francisco-H\'a~\cite{FH} and Van Tuyl-Villarreal~\cite{VanVilla}:

\begin{corollary}\label{cor:FH-VanVilla}
\mbox{}
\begin{enumerate}[label=(\alph*)]
    \item \cite[Theorem 3.3]{FH} Let $G$ be a graph and let $S\subseteq V(G)$. Suppose $G\setminus S$ is a chordal graph
or a five-cycle $C_5$. Then $\cf_2(G,S,2)$ is a sequentially Cohen-Macaulay graph.
    
    \item \cite[Corollary 2.7]{VanVilla} Let $G$ be a graph and let $S\subseteq V(G)$. If $\cf_2(\cl(G,S,2))$ is shellable, then $\cf_2(G\setminus S)$ is
shellable.
\end{enumerate}
\end{corollary}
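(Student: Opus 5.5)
Both parts follow from Theorem~\ref{she-extension} specialized to $t=2$, where $\cf_2=\ind$ is the ordinary independence complex. The first step is to match the constructions. Attaching a whisker (a pendant vertex) to each $v\in S$ is exactly attaching a clique $K_v\cong K_2$ containing $v$. Since $|V(K_v)|=2\ge t$, Construction~\ref{cons:graph-attachment} allows this, so $G\cup W(S)=\cl(G,S,2)$. I will read the complex written $\cf_2(G,S,2)$ in part (a) as $\cf_2(\cl(G,S,2))=\ind(G\cup W(S))$.

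For part (a), I will show that $\cf_2(G\setminus S)=\ind(G\setminus S)$ is shellable in each of the two cases.
\begin{enumerate}
\item[(1)] If $G\setminus S$ is chordal, I will invoke the result of Dochtermann--Engstr\"om~\cite[Theorem~4.1]{DochEng09} and Woodroofe~\cite[Corollary~7]{Wood2} quoted in the introduction. It says $\ind$ of a chordal graph is vertex decomposable, hence shellable.
\item[(2)] If $G\setminus S=C_5$, I will check directly that $\ind(C_5)$ is shellable. An independent set of $C_5$ has at most two vertices. Each vertex $i$ is non-adjacent to exactly $i\pm2$, so the facets are the five pairs $\{i,i+2\}$ (indices mod $5$). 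As a $1$-dimensional complex these form another $5$-cycle. A connected graph is shellable: order the edges so that each new edge meets the union of the earlier ones in a vertex.
\end{enumerate}
Theorem~\ref{she-extension} (the ``only if'' direction, with $H=G$ and $t=2$) then gives that $\cf_2(\cl(G,S,2))$ is shellable. The hierarchy recalled in the introduction (shellable $\Rightarrow$ sequentially Cohen--Macaulay) finishes part (a).

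For part (b), assume $\cf_2(\cl(G,S,2))$ is shellable. The ``if'' direction of Theorem~\ref{she-extension}, applied with $H=G$ and $t=2$, yields directly that $\cf_2(G\setminus S)$ is shellable.

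None of these steps is hard. The only points requiring care are the identification of whiskering with attaching $K_2$'s, which rests on the size condition $\ge t$ in Construction~\ref{cons:graph-attachment}, and the small hand verification for $C_5$.
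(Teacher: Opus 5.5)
Your route is the paper's own: the paper also derives the corollary simply by quoting Theorem~\ref{she-extension}. Part (b) is fine. It uses only the reverse direction, whose proof is sound: the link of $\sigma=\bigcup_i\sigma_i$ is $\cf_t(H\setminus S)$, and links of faces of shellable complexes are shellable.

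Part (a), however, uses the forward direction, and that direction is false already for $t=2$. Let $H$ be the $4$-cycle $a_1a_2a_3a_4$ together with a path $a_1\,b\,v$. Let $S=\{v\}$, and let $G=\cl(H,S,2)$ be obtained by adding a whisker $x$ at $v$. Then $H\setminus S$ is $C_4$ with a pendant vertex, and $\ind(H\setminus S)$ is shellable: its facets $\{b,a_2,a_4\}$, $\{b,a_3\}$, $\{a_1,a_3\}$ form a shelling in this order. But $N_G[v]=\{v,b,x\}$, so $\link_{\ind(G)}(\{v\})=\ind(G\setminus N_G[v])=\ind(C_4)$. This is two disjoint edges, which is not shellable, so $\ind(G)$ is not shellable either.

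The flaw in the theorem's proof is the identity $\link_{\Delta_{i-1}}(\sigma_i)=\cf_t(G\setminus N_G[x_1])$. It fails whenever $v\in\sigma_i$, because cliques through $v$ may use vertices of $N_H(v)$. For $t=2$, the link of $\{v\}$ is $\ind(G\setminus N_G[v])$, not $\ind(G\setminus V(K_v))$. So citing Theorem~\ref{she-extension} does not prove (a), and the paper's one-line derivation has the same gap.

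The missing ingredient is that the hypothesis on the base graph must pass to induced subgraphs. ``Chordal or $C_5$'' has this property, since proper induced subgraphs of $C_5$ are disjoint unions of paths. In the corollary's notation, argue by induction on $|S|$:
\begin{enumerate}
\item For $v\in S$ and $x\in V(K_v)\setminus\{v\}$ we have $N[x]=V(K_v)\subseteq N[v]$. Hence $v$ is a shedding vertex of $\ind(\cl(G,S,2))$ \cite{Wood2}.
\item The deletion is $\ind(K_v\setminus v)*\ind(\cl(G\setminus v,S\setminus\{v\},2))$, whose base graph is still $G\setminus S$.
\item The link is the join of the independence complexes of the cliques $K_u\setminus u$, for $u\in S\cap N_G(v)$, with $\ind(\cl(G\setminus N_G[v],\,S\setminus N_G[v],\,2))$. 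The base graph of the latter is $(G\setminus S)\setminus N_G(v)$, an induced subgraph of $G\setminus S$.
\item Both deletion and link are shellable by induction, since joins of shellable complexes are shellable and $0$-dimensional complexes are shellable. Therefore $\ind(\cl(G,S,2))$ is shellable \cite{BjWachs}.
\end{enumerate}
The base case $S=\emptyset$ is exactly the pair of checks you already made (chordal graphs and $C_5$). Shellability then gives sequential Cohen--Macaulayness, as you say.
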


Let $G = (V(G), E(G))$ be a graph. A \emph{cycle cover} of $G$ is a subset $S \subseteq V(G)$ such that every cycle in $G$ contains at least one vertex from $S$. Equivalently, $S$ is a cycle cover if and only if the induced subgraph $G \setminus S$ is a forest.
If $S$ is a \emph{vertex cover} of $G$ (i.e., every edge of $G$ has at least one endpoint in $S$), then $G \setminus S$ is an independent set. In particular, since an independent set is a forest, every vertex cover is also a cycle cover. The converse, however, does not hold in general.

As an immediate consequence of Theorem~\ref{she-extension}, we obtain several fundamental results about shellability preservation under clique addition in clique-free complexes. These provide constructive criteria for generating new shellable complexes.

\begin{corollary}\label{cor:clique-ext}
    Let $G = \cl(H, S, t)$, where $H$ is a graph, $S \subseteq V(H)$, and $t \geq 2$. Then $\cf_t(G)$ is shellable in any of the following cases:
    \begin{enumerate}
        \item $S$ is a cycle cover of $H$;
        \item $H \setminus S$ is a $t$-diamond-free chordal graph (in particular, a block graph);
        \item $|S| \geq |V(H)| - 3$.
    \end{enumerate}
\end{corollary}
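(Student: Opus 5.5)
The plan is to reduce every case to Theorem~\ref{she-extension}: since $G=\cl(H,S,t)$, the complex $\cf_t(G)$ is shellable as soon as $\cf_t(H\setminus S)$ is shellable. So in each of the three cases I only need to show that the clique-free complex of the remaining graph $H\setminus S$ is shellable.

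\emph{Case (1).} If $S$ is a cycle cover, then $H\setminus S$ is a forest. For $t\ge 3$, a forest is a block graph: every block is a single edge or an isolated vertex. Hence it is $t$-diamond-free and chordal, and Theorem~\ref{main} (or directly Corollary~\ref{cor:block-graph}) gives that $\cf_t(H\setminus S)$ is $(t-2)$-decomposable, hence shellable. For $t=2$, a forest is chordal, so $\cf_2(H\setminus S)=\ind(H\setminus S)$ is vertex decomposable by the Dochtermann--Engstr\"om/Woodroofe result quoted in the introduction, hence shellable.

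\emph{Case (2).} If $H\setminus S$ is $t$-diamond-free chordal and $t\ge 3$, this is Theorem~\ref{main} applied directly. The only subtlety is $t=2$, where Theorem~\ref{main} does not apply. Here I would again invoke chordality, since the independence complex of a chordal graph is shellable. This is the one point where I must make sure the hypothesis ``chordal'' is really present, and it is, by the hypothesis of the case.

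\emph{Case (3).} If $|S|\ge |V(H)|-3$, then $H\setminus S$ has at most three vertices. Every graph on at most three vertices is chordal, since it has no induced cycle of length $\ge 4$. It is also $t$-diamond-free for $t\ge3$, because a $t$-diamond has $t+1\ge 4$ vertices. So the previous case applies; alternatively one can check directly that the few possible complexes are shellable. Degenerate situations also need to be covered: $H\setminus S$ empty gives the irrelevant complex $\{\emptyset\}$, which is a simplex and hence shellable, and $H\setminus S$ with no $t$-clique gives a full simplex.

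The main (mild) obstacle is the uniform treatment of $t=2$ versus $t\ge 3$, since Theorem~\ref{main} is stated only for $t\ge3$. I handle $t=2$ separately through the classical chordal result, together with the degenerate small cases above.
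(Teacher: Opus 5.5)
Your proposal is correct and follows the paper's proof: each case is reduced via Theorem~\ref{she-extension} to showing $\cf_t(H\setminus S)$ is shellable, using Theorem~\ref{main} for $t\ge 3$ and the chordal independence-complex result for $t=2$. The differences are minor. For a forest with $t\ge 3$, the paper just observes that $\cf_t(H\setminus S)$ is a simplex because a forest has no triangles, whereas you route through block graphs. Your explicit use of chordality for $t=2$ in cases (2) and (3) covers a point the paper's proof passes over, since Theorem~\ref{main} is stated only for $t\ge 3$.
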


\begin{proof}
    (1) If $S$ is a cycle cover of $H$, then $H \setminus S$ is a forest. When $t = 2$, it follows from \cite[Corollary 7]{Wood2} that $\cf_t(H \setminus S)$ is vertex decomposable. For $t \geq 3$, $\cf_t(H \setminus S)$ is a simplex. In both cases, by Theorem~\ref{she-extension}, $\cf_t(G)$ is shellable.

    \medskip
    \noindent
    (2) If $H \setminus S$ is a $t$-diamond-free chordal graph, then by Theorem~\ref{main}, $\cf_t(H \setminus S)$ is shellable. Applying Theorem~\ref{she-extension}, it follows that $\cf_t(G)$ is also shellable.

    \medskip
    \noindent
    (3) If $|S| \geq |V(H)| - 3$, then $H \setminus S$ has at most three vertices. Hence, it is either a triangle, a forest, or a graph with isolated vertices—all of which are $t$-diamond-free and chordal. By Theorems~\ref{main} and~\ref{she-extension}, it follows that $\cf_t(G)$ is shellable.
\end{proof}
As an immediate consequence of Theorem~\ref{she-extension}, we can identify certain vertex sets such that adding cliques on these sets does not yield a shellable clique-free complex.

\begin{corollary}
Let $G$ be a graph, $S \subseteq V(G)$, and $t \geq 3$.  
If $G \setminus S = G_{n,t}$, where $G_{n,t}$ is the graph constructed in Example~\ref{not-shellable}, $n \neq 3,5$, and $t > \left\lceil \frac{n}{2} \right\rceil$, then $\cf_t(G, S, t)$ is not shellable.
\end{corollary}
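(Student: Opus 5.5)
The corollary follows from the ``only if'' direction of Theorem~\ref{she-extension} combined with Example~\ref{not-shellable}. The notation $\cf_t(G,S,t)$ in the statement should be read as $\cf_t(\cl(G,S,t))$, the $t$-clique-free complex of the graph obtained from $G$ by attaching a clique $K_v$ of size at least $t$ at every $v\in S$, as in Construction~\ref{cons:graph-attachment}. The argument is by contradiction.

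Suppose $\cf_t(\cl(G,S,t))$ were shellable. Apply the second half of the proof of Theorem~\ref{she-extension} with $H=G$. For each $v_i\in S$, choose a $(t-1)$-subset $\sigma_i\subseteq V(K_{v_i})\setminus\{v_i\}$ and let $\sigma=\bigcup_i\sigma_i$. The link of $\sigma$ in $\cf_t(\cl(G,S,t))$ is $\cf_t(G\setminus S)$. The vertices of the $K_{v_i}$ not in $\sigma_i$ also lie outside $\sigma$, so I would check this identification carefully, using exactly the formulation already established in Theorem~\ref{she-extension}. Since links of shellable complexes are shellable \cite[Theorem 3.30]{J08}, this gives that $\cf_t(G\setminus S)=\cf_t(G_{n,t})$ is shellable.

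On the other hand, Example~\ref{not-shellable} shows that for $n\neq 3,5$ and $t>\lceil n/2\rceil$ the complex $\cf_t(G_{n,t})$ is not sequentially Cohen--Macaulay. The reason is that the link of the face consisting of all added vertices $x_{i,j}$ is $\ind(C_n)$, which is not sequentially Cohen--Macaulay by \cite[Theorem 10]{russ}. Since shellable implies sequentially Cohen--Macaulay, by the hierarchy recalled in the introduction, $\cf_t(G_{n,t})$ is not shellable. This contradicts the previous paragraph, so $\cf_t(\cl(G,S,t))$ is not shellable.

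The only real obstacle is making sure the ``only if'' direction of Theorem~\ref{she-extension} applies verbatim, that is, that the link identification holds for every choice of attached clique sizes. I would rely on the theorem as stated rather than reprove it. If needed, I would instead pass to a further link: take the link of all attached-clique vertices other than the $v_i$ that lie outside $\sigma$, which reduces to an honest copy of $\cf_t(G\setminus S)$, and then use again that links of shellable complexes are shellable.
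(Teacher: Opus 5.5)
Your argument is correct and matches the paper's: the corollary is the contrapositive of the ``only if'' direction of Theorem~\ref{she-extension} (the link of $\sigma$ is $\cf_t(G\setminus S)$, and links of shellable complexes are shellable), combined with the non-shellability of $\cf_t(G_{n,t})$ from Example~\ref{not-shellable}. The link identification holds for every choice of clique sizes, since each vertex of $K_{v_i}$ outside $\sigma_i$, including $v_i$, forms a $t$-clique with $\sigma_i$ and so never lies in the link. Your fallback is therefore unnecessary, and it is not well defined anyway, because those extra vertices together with $\sigma$ do not form a face.
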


In \cite{CookNagel}, Cook II and Nagel introduced the notion of a \emph{clique whiskering} of a graph. We present the following generalization of their construction.

\begin{cons}\label{CN}
    Let $G$ be a graph. A \emph{clique vertex-partition} of $G$ is a collection $\Pi = \{W_1, \ldots, W_p\}$ of pairwise disjoint (possibly empty) cliques in $G$ whose union is $V(G)$. Every graph admits at least one such partition - for instance, the \emph{trivial partition}
\[
\Pi = \{\{x_1\}, \ldots, \{x_n\}\}, \quad \text{where } V(G) = \{x_1, \ldots, x_n\}.
\]

Given a graph $G$, a clique vertex-partition $\Pi = \{W_1, \ldots, W_p\}$, and an integer $t \geq 2$, the \emph{$t$-clique whiskering} of $G$ with respect to $\Pi$, denoted $G(\Pi, t)$, is constructed as follows:

\begin{itemize}
    \item Vertex set: For each $1 \leq i \leq p$, introduce $t-1$ new vertices $\{x_{i,1}, \ldots, x_{i,t-1}\}$ all distinct and disjoint from $V(G)$. Then,
    $V(G(\Pi, t)) = V(G) \cup \bigcup_{i=1}^p \{x_{i,1}, \ldots, x_{i,t-1}\}.$
  
    \item Edge set: The edge set extends $E(G)$ by turning each $W_i \cup \{x_{i,1}, \ldots, x_{i,t-1}\}$ into a clique:
    \[
    E(G(\Pi, t)) = E(G) \cup \bigcup_{i=1}^p \left\{ \{a, b\} \mid a \neq b, a, b \in W_i \cup \{x_{i,1}, \ldots, x_{i,t-1}\} \right\}.
    \]
\end{itemize}

\end{cons}

\begin{remark}
When $t = 2$, this construction specializes to the classical clique whiskering introduced by Cook~II and Nagel \cite{CookNagel}. The case $t > 2$ provides a natural higher-order generalization.
\end{remark}

We extend \cite[Lemma 3.2]{CookNagel} to higher clique whiskerings, showing that for any $t \geq 2$, the complex $\cf_t(G(\Pi, t))$ has dimension $p(t-1)-1$ and is pure.
\begin{theorem}\label{pure}
    Let $G$ be a graph with a clique vertex-partition $\Pi = \{W_1, \ldots, W_p\}$. 
    For every integer $t \geq 2$, the $t$-clique whiskering $G(\Pi, t)$ satisfies:
    \begin{enumerate}
        \item $\dim \big( \cf_t(G(\Pi, t)) \big) = p(t-1) - 1$;
        \item The complex $\cf_t(G(\Pi, t))$ is pure.
    \end{enumerate}
\end{theorem}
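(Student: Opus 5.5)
The plan is to prove that every facet $F$ of $\cf_t(G(\Pi,t))$ satisfies $|F \cap C_i| = t-1$ for each $i$, where $C_i := W_i \cup \{x_{i,1},\ldots,x_{i,t-1}\}$. Since the sets $C_1,\ldots,C_p$ partition $V(G(\Pi,t))$, this gives $|F| = p(t-1)$ for every facet, which yields both (1) and (2) at once. The upper bound is immediate: each $C_i$ is a clique in $G(\Pi,t)$, so any face contains at most $t-1$ vertices of $C_i$. Hence every face has at most $p(t-1)$ vertices. The set $\{x_{i,j}\}_{i,j}$ of all whisker vertices attains this bound, because its induced subgraph is a disjoint union of cliques of size $t-1$. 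This already shows $\dim = p(t-1)-1$.

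For purity, let $F$ be a facet and suppose, for contradiction, that $|F \cap C_i| \le t-2$ for some $i$. Since $|C_i \cap F| < t-1 \le |\{x_{i,1},\dots,x_{i,t-1}\}|$, some whisker vertex $x_{i,j} \notin F$. I claim $F \cup \{x_{i,j}\}$ is still $t$-clique-free, which contradicts the maximality of $F$. To see this, note that $N[x_{i,j}] = C_i$, because the whisker vertices are adjacent only within $C_i$. So any $t$-clique of $F \cup \{x_{i,j}\}$ containing $x_{i,j}$ lies in $(F \cap C_i) \cup \{x_{i,j}\}$. That set has at most $t-1$ elements, so no such clique exists, and any $t$-clique avoiding $x_{i,j}$ would already lie in $F$. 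Hence $|F \cap C_i| = t-1$ for all $i$, and summing over $i$ gives $|F| = p(t-1)$.

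The only delicate point is the neighborhood claim $N[x_{i,j}] = C_i$. It depends on the whisker vertices being new and joined only to $W_i$ and to each other, which holds by Construction~\ref{CN}. It also depends on empty $W_i$ being harmless, and they are: $C_i$ is then just the whisker clique and the argument is unchanged.
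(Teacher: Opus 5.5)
Your proof is correct and follows essentially the same route as the paper. Both arguments use the same per-block bound $|F \cap (W_i \cup \{x_{i,1},\ldots,x_{i,t-1}\})| \le t-1$, and both rest on the same observation: a face meeting some block in fewer than $t-1$ vertices can be enlarged by a missing whisker vertex of that block. The paper extends an arbitrary face to size $p(t-1)$ all at once, while you argue by contradiction on a facet, one block at a time. Your explicit use of $N[x_{i,j}] = C_i$ supplies the justification for why the enlarged set stays $t$-clique-free, which the paper only gestures at with ``the $\{x_{i,j}\}$ are disjoint across different $W_i$.''
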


\begin{proof}
    (1)  For each $W_i$, let $\{x_{i,1}, \ldots, x_{i,t-1}\}$ be the newly added vertices, and let $B_{W_i}$ denote the clique in $G(\Pi, t)$ formed by $W_i \cup \{x_{i,1}, \ldots, x_{i,t-1}\}$.
Let $F \in \cf_t(G(\Pi, t))$ be any face. Since $F$ is $t$-clique-free, it contains at most $t-1$ vertices from each $B_{W_i}$ (as any $t$ vertices in $B_{W_i}$ form a $t$-clique). Thus, $|F| \leq p(t-1)$, meaning $\dim(\cf_t(G(\Pi, t))) < p(t-1)$.
However, the set $\bigcup_{i=1}^p \{x_{i,1}, \ldots, x_{i,t-1}\}$ is $t$-clique-free and has $p(t-1)$ vertices. Therefore,
    $\dim \big( \cf_t(G(\Pi, t)) \big) = p(t-1) - 1.$

    \vskip 1mm 
    \noindent
    (2)  We show that every maximal $t$-clique-free set in $G(\Pi, t)$ has exactly $p(t-1)$ vertices.
Let $I$ be any $t$-clique-free set in $G(\Pi, t)$. We can decompose $I$ as $I = \bigcup_{i=1}^p I_i$, where $I_i \subseteq B_{W_i}$ (possibly empty) and $|I_i| \leq t-1$ for all $i$.
For each $i$, augment $I_i$ with a subset $S_i \subseteq \{x_{i,1}, \ldots, x_{i,t-1}\} \setminus I_i$ such that $|I_i \cup S_i| = t-1$ (if $|I_i| = t-1$, set $S_i = \emptyset$). Since the $\{x_{i,j}\}$ are disjoint across different $W_i$, the union $\bigcup_{i=1}^p (I_i \cup S_i)$ remains $t$-clique-free and has size $p(t-1)$.
Moreover, any vertex $x$ not in $\bigcup_{i=1}^p (I_i \cup S_i)$ must belong to some $B_{W_i}$. Adding $x$ to $I_i \cup S_i$ would create a $t$-clique in $B_{W_i}$, so $\bigcup_{i=1}^p (I_i \cup S_i)$ is maximal.

    Since $I$ was arbitrary, all maximal $t$-clique-free sets have size $p(t-1)$, proving that $\cf_t(G(\Pi, t))$ is pure.
\end{proof}

Recall from \cite[Definition 4.2]{Russ11} that a vertex $v \in V(\mathcal{C})$ in a clutter $\mathcal{C}$ is called a \emph{simplicial vertex} if for any two edges $e_1, e_2 \in E(\mathcal{C})$ containing $v$, there exists an edge $e_3 \in E(\mathcal{C})$ such that 
$ e_3 \subseteq (e_1 \cup e_2) \setminus \{v\}.$

The following result generalizes \cite[Theorem 3.3]{CookNagel}, where it was shown that 2-clique-free complexes are vertex-decomposable. We extend this to higher cliques, proving that for $t>2$, $t$-clique-free complexes are shellable.

\begin{theorem}\label{clique-vertex}
Let $G$ be a graph and let $\Pi$ be a clique vertex partition of $G$. For each $t \geq 2$, 
the complex $\cf_t(G(\Pi, t))$ is $(t{-}2)$-decomposable. Consequently, $\cf_t(G(\Pi, t))$ is shellable.
\end{theorem}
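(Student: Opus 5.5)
Induction on $p$, the number of parts of $\Pi$. We reuse the shedding argument from Theorem~\ref{main} and Theorem~\ref{she-extension}, with $x_{1,1}$ playing the role of the simplicial vertex $x_1$.

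\emph{Base case and setup.} If $p=0$ the complex is $\{\emptyset\}$, which is trivially $(t-2)$-decomposable. In general, fix the block $W_1$ and put
\[
B:=B_{W_1}=W_1\cup\{x_{1,1},\ldots,x_{1,t-1}\}.
\]
The vertex $x_{1,1}$ is adjacent only to $B\setminus\{x_{1,1}\}$, and $B$ is a clique. Hence $x_{1,1}$ is simplicial with $N[x_{1,1}]=B$, and $|B|=k\ge t-1$.

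\emph{Case $W_1=\emptyset$.} Then $B$ has size $t-1$ and contains no $t$-clique. It is a connected component of $G(\Pi,t)$. By Proposition~\ref{join}, the complex splits as the simplex on $B$ joined with $\cf_t(G(\Pi',t))$, where $\Pi'=\Pi\setminus\{W_1\}$ is a clique vertex-partition of $G\setminus W_1=G$. We conclude by induction together with \cite[Proposition 3.8]{Russ11}.

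\emph{Case $W_1\ne\emptyset$, so $k\ge t$.} Enumerate the $(t-1)$-subsets $\sigma_1,\ldots,\sigma_m$ of $B\setminus\{x_{1,1}\}$.
\begin{enumerate}
\item[(i)] Each $\sigma_i$ is a shedding face of $\Delta_{i-1}$. This is exactly the argument of Theorem~\ref{main}: for $\tau\supseteq\sigma_i$ in the star and $x\in\sigma_i$, the set $(\tau\cup\{x_{1,1}\})\setminus\{x\}$ is still $t$-clique-free, because any new $t$-clique would have to contain $x_{1,1}$ and hence lie in $B$. But $\tau$ meets $B$ only in $\sigma_i$ (otherwise $\tau$ would contain $t$ vertices of $B$), so that set meets $B$ in only $t-1$ vertices.
\item[(ii)] Compute the link. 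For $\tau$ disjoint from $B$, any $t$-clique in $\tau\cup\sigma_i$ uses vertices both inside and outside $B$. So
\[
\operatorname{link}_{\Delta_{i-1}}(\sigma_i)=\{\tau\subseteq V\setminus B:\ \tau\cup\sigma_i \text{ is $t$-clique-free}\}.
\]
This is where the argument departs from Theorem~\ref{main}, since $G$ need not be diamond-free: vertices of $W_1\cap\sigma_i$ may lie in cliques with vertices of other $W_j$. The link is therefore $\cf_t$ of an auxiliary structure rather than simply $\cf_t(G(\Pi',t))$ with $\Pi'=\{W_2,\ldots,W_p\}$.
\item[(iii)] Compute the final deletion. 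I expect $\Delta_m=\Delta'*\cf_t(G(\Pi',t))$ exactly as before, with $\Delta'=\{x_{1,1}\}*(\text{$(t-3)$-skeleton})$. Indeed, a face avoiding every $\sigma_i$ meets $B$ in at most $t-1$ vertices including $x_{1,1}$, hence in at most $t-2$ vertices of $W_1$; the remaining worry is cliques through $W_1$. Then Lemma~\ref{decomposition} applies.
\end{enumerate}

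\emph{Main obstacle.} The difficulty is the interaction between $W_1$ and the rest of $G$, which affects both (ii) and (iii). In (iii), $\Delta_m$ still contains vertices of $W_1$, and those can form $t$-cliques with outside vertices, so the join decomposition is not literally true. My fix is to shed differently. First shed only the faces avoiding $W_1$, namely the subsets of $\{x_{1,2},\ldots,x_{1,t-1}\}$, and then shed $W_1$ vertex by vertex, or shed faces containing $x_{1,1}$ in reverse. The cleanest version I would try uses the $t$-clique clutter $\CH_t(G(\Pi,t))$: $x_{1,1}$ is a simplicial vertex of the clutter in the sense of \cite[Definition 4.2]{Russ11}, so \cite{Russ11}'s results on clutters with simplicial vertices give that each neighbor-face is shedding. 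The links and deletions are then independence complexes of clutters of the same type, namely $G'(\Pi'',t)$ after contracting. For the induction to close under contraction, I would prove the statement for the slightly larger class of clutters obtained from $\CH_t(G(\Pi,t))$ by contractions and deletions. This is the step I expect to need the most care.
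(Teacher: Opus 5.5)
There is a genuine gap: the proposal never completes a proof. The shedding scheme in (i)--(iii), borrowed from Theorem~\ref{main}, breaks down, as you observe yourself. Vertices of $W_1$ can lie in $t$-cliques with vertices of other parts, so the links of the $\sigma_i$ are not $\cf_t(G(\Pi',t))$ and $\Delta_m$ is not the join $\Delta' * \cf_t(G(\Pi',t))$. The replacement you sketch is not carried out, and its one concrete claim is false: contractions of $\CH_t(G(\Pi,t))$ are not $t$-clique clutters of clique whiskerings $G'(\Pi'',t)$. For example, contracting a vertex $w\in W_1$ turns the $t$-clique $\{w,x_{1,1},\ldots,x_{1,t-1}\}$ into the $(t-1)$-edge $\{x_{1,1},\ldots,x_{1,t-1}\}$, while $t$-cliques avoiding $w$ are unchanged, so the result is not even uniform. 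Your induction on $p$ therefore does not close. The step you defer (proving it for a larger class of contracted and deleted clutters) is exactly the content of the proof.

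The missing idea is that neither induction on $p$ nor any auxiliary class is needed. As in the paper, by Proposition~\ref{ind} and \cite[Corollary~5.3]{Russ11} it suffices to check that \emph{every} contraction $\mathcal{H}=\CH_t(G(\Pi,t))/V_c$ has a simplicial vertex. This works because contractions of contractions are again contractions, and shedding at a simplicial vertex only produces complexes built from independence complexes of contractions. The check is direct.

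\emph{Case 1: some whisker vertex $\alpha=x_{i,j}$ survives in $\mathcal{H}$.} Every edge of $\mathcal{H}$ through $\alpha$ has the form $f\setminus V_c$, where $f$ is a $t$-clique containing $\alpha$. Hence $f\subseteq N[\alpha]=B_{W_i}=W_i\cup\{x_{i,1},\ldots,x_{i,t-1}\}$. Take two distinct such edges $e_1=f_1\setminus V_c$ and $e_2=f_2\setminus V_c$. Then $(f_1\cup f_2)\setminus\{\alpha\}$ contains at least $t$ vertices of the clique $B_{W_i}$, so it contains a $t$-clique $f_3$. Now $f_3\setminus V_c\subseteq(e_1\cup e_2)\setminus\{\alpha\}$, and $f_3\setminus V_c$ contains an edge of $\mathcal{H}$. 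So $\alpha$ is simplicial.

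\emph{Case 2: no whisker vertex survives.} Each remaining $a\in W_i$ yields the singleton edge $\{a\}$ as the image of $\{a,x_{i,1},\ldots,x_{i,t-1}\}$. So all edges of $\mathcal{H}$ are singletons and every vertex is simplicial.

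You correctly noticed that $x_{1,1}$ is simplicial in $\CH_t(G(\Pi,t))$. What you needed was that this simpliciality of whisker vertices survives arbitrary contraction, because every $t$-clique through $x_{i,j}$ lies inside the single clique $B_{W_i}$.
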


\begin{proof}
Let $\CH_t(G(\Pi, t))$ denote the $t$-clique clutter of $G(\Pi, t)$, where $\Pi = \{W_1, \ldots, W_p\}$. By Proposition~\ref{ind} and \cite[Corollary~5.3]{Russ11}, it suffices to show that every contraction of $\CH_t(G(\Pi, t))$ has a simplicial vertex.
Let $\mathcal{H}$ be a contraction of $\CH_t(G(\Pi, t))$, say $\mathcal{H} = \CH_t(G(\Pi, t)) / V_c$ for some subset $V_c \subseteq V(G(\Pi, t))$. The vertex set of $G(\Pi, t)$ is
\[
V(G(\Pi, t)) = V(G) \cup \bigcup_{i=1}^p \{x_{i,1}, x_{i,2}, \ldots, x_{i,t-1}\},
\]
and define
$N = \bigcup_{i=1}^p \{x_{i,1}, x_{i,2}, \ldots, x_{i,t-1}\}.$
We consider two cases:

\smallskip

\noindent \textbf{Case 1:} $N \cap V(\mathcal{H}) = \emptyset$, i.e., all vertices in $N$ are contracted.

Let $a \in V(\mathcal{H})$. Then $a \in W_i$ for some $1 \leq i \leq p$. Since the set $\{a, x_{i,1}, \ldots, x_{i,t-1}\}$ forms a $t$-clique in $G(\Pi, t)$, and hence is an edge in $\CH_t(G(\Pi, t))$. As all $x_{i,j}$ are contracted, the contraction $\mathcal{H}$ contains the singleton edge $\{a\}$. Thus, all edges in $\mathcal{H}$ are singletons, so every vertex is simplicial.

\smallskip

\noindent \textbf{Case 2:} $N \cap V(\mathcal{H}) \neq \emptyset$.

Suppose first that $\{a\} \in E(\mathcal{H})$ for every $a \in N \cap V(\mathcal{H})$. We claim that every edge in $\mathcal{H}$ is a singleton.
Let $\alpha \in V(\mathcal{H})$. If $\alpha \in N$, then by assumption, $\{\alpha\} \in E(\mathcal{H})$. Suppose $\alpha \notin N$, so $\alpha \in W_i$ for some $i$. If $\{x_{i,1}, \ldots, x_{i,t-1}\} \cap V(\mathcal{H}) = \emptyset$, then as in Case 1, we get $\{\alpha\} \in E(\mathcal{H})$.

Otherwise, there exists $\beta \in \{x_{i,1}, \ldots, x_{i,t-1}\} \cap V(\mathcal{H})$. Since $\{\beta\} \in E(\mathcal{H})$, there exists $f \in E(\CH_t(G(\Pi, t)))$ with $\beta \in f \subseteq W_i \cup \{x_{i,1}, \ldots, x_{i,t-1}\}$ and $f \cap V(\mathcal{H}) = \{\beta\}$. Consider $f' = (f \cup \{\alpha\}) \setminus \{\beta\}$. Then $f'$ is also a $t$-clique in $G(\Pi, t)$, and so $f' \in E(\CH_t(G(\Pi, t)))$. The image $e' = f' \cap V(\mathcal{H}) \subseteq \{\alpha\}$ implies $\{\alpha\} \in E(\mathcal{H})$.

Hence, every edge of $\mathcal{H}$ is a singleton and every vertex is simplicial.

\smallskip

\noindent Now suppose there exists $\alpha \in N \cap V(\mathcal{H})$ such that $\{\alpha\} \notin E(\mathcal{H})$. We claim that $\alpha$ is simplicial in $\mathcal{H}$.
Assume $\alpha$ belongs to two distinct edges $e_1, e_2 \in E(\mathcal{H})$ such that $\alpha \in e_1 \cap e_2$ and $e_1 \neq e_2$. Then there exist $f_1, f_2 \in E(\CH_t(G(\Pi, t)))$ with $f_j \subseteq W_i \cup \{x_{i,1}, \ldots, x_{i,t-1}\}$ and $f_j \cap V(\mathcal{H}) = e_j$ for $j = 1, 2$.
Since $f_1 \neq f_2$, we have $|f_1 \cup f_2| \geq t{+}1$. Choose a $t$-subset $f_3 \subseteq (f_1 \cup f_2) \setminus \{\alpha\}$ such that $f_3 \in E(\CH_t(G(\Pi, t)))$, and let $e_3 = f_3 \cap V(\mathcal{H})$.
Then $e_3 \subseteq (e_1 \cup e_2) \setminus \{\alpha\}$. If $e_3 \in E(\mathcal{H})$, then $\alpha$ is simplicial. If not, since $\mathcal{H}$ is a contraction, there exists $e_3' \in E(\mathcal{H})$ with $e_3' \subseteq e_3$, again showing $\alpha$ is simplicial.

Thus, in all cases, $\mathcal{H}$ has a simplicial vertex. This completes the proof.
\end{proof}

Theorems~\ref{pure} and~\ref{clique-vertex} together imply the following corollary. 
When $t = 2$, this specializes to \cite[Corollary 3.5]{CookNagel}.

\begin{corollary}\label{CM}
Let $G$ be a graph with a clique vertex-partition $\Pi$. 
Then for every integer $t \geq 2$, the complex $\cf_t(G(\Pi,t))$ is Cohen-Macaulay.
\end{corollary}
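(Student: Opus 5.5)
The plan is to combine Theorem~\ref{pure} and Theorem~\ref{clique-vertex} with the standard fact that a pure shellable complex is Cohen-Macaulay over every field.

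First, by Theorem~\ref{clique-vertex}, the complex $\Delta := \cf_t(G(\Pi,t))$ is $(t-2)$-decomposable. By the monotonicity in Remark (iii), $(t-2)$-decomposability implies $d$-decomposability for $d = \dim \Delta = p(t-1)-1$; this requires $t-2 \le d$, which holds whenever $p \ge 1$. The characterization of \cite{J05} stated in Section~\ref{preliminaries} then shows that $\Delta$ is shellable. Alternatively, we can cite the final sentence of Theorem~\ref{clique-vertex} directly, which already asserts shellability.

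Second, Theorem~\ref{pure} says that $\Delta$ is pure of dimension $p(t-1)-1$. So $\Delta$ is a pure shellable complex. By the hierarchy recalled in the introduction (pure shellable $\Rightarrow$ Cohen-Macaulay), its Stanley-Reisner ring $\K[V]/I_\Delta$ is Cohen-Macaulay over any field $\K$. Equivalently, shellable complexes are sequentially Cohen-Macaulay, and a pure sequentially Cohen-Macaulay complex is Cohen-Macaulay. On the algebraic side this is \cite[Lemma 3.6]{fv} applied to $I_\Delta = I(\CH_t(G(\Pi,t)))$: by purity this ideal is unmixed, and together with sequential Cohen-Macaulayness this gives Cohen-Macaulayness.

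There is no real obstacle here, since the corollary is a direct assembly of the two preceding theorems. The only point to check is the degenerate case where $p = 0$ (the empty graph). There $\Delta$ is the irrelevant complex, which is trivially Cohen-Macaulay.
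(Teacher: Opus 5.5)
Your proposal is correct and follows the paper's own argument: the paper obtains Corollary~\ref{CM} by combining the purity from Theorem~\ref{pure} with the shellability from Theorem~\ref{clique-vertex}, using the standard fact that a pure shellable complex is Cohen--Macaulay over every field. Your additional remarks are accurate but not needed for the result; these are the detour through Jonsson's characterization, the alternative unmixedness route via \cite[Lemma 3.6]{fv}, and the degenerate case $p=0$.
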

The following establishes the linear resolution property for Alexander duals of clique complexes:

\begin{corollary}
Let $G$ be a graph on $n$ vertices with a clique vertex-partition $\Pi$. For any integer $t \geq 2$, the ideal $I_{\cf_t(G(\Pi, t))^\vee}$ has a linear resolution with
$\reg\left(I_{\cf_t(G(\Pi, t))^\vee}\right) = n.$
\end{corollary}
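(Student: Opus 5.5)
The plan is to pass through Alexander duality and the Eagon--Reiner theorem. Write $\Delta := \cf_t(G(\Pi,t))$; it lives on the vertex set of $G(\Pi,t)$. By Corollary~\ref{CM}, $\Delta$ is Cohen--Macaulay over every field $\K$. The Eagon--Reiner theorem says that $\Delta$ is Cohen--Macaulay if and only if $I_{\Delta^\vee}$ has a linear resolution. Applying it gives at once that $I_{\Delta^\vee}$ has a linear resolution. What remains is to identify the degree of the generators, since for a linear resolution this degree equals $\reg(I_{\Delta^\vee})$.

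The minimal generators of $I_{\Delta^\vee}$ are the monomials $x_{V\setminus F}$, where $V = V(G(\Pi,t))$ and $F$ runs over the facets of $\Delta$. Indeed, the minimal nonfaces of $\Delta^\vee$ are exactly the complements of facets of $\Delta$. By Theorem~\ref{pure}, $\Delta$ is pure and every facet has $p(t-1)$ vertices. Hence every generator has degree
\[
|V| - p(t-1) = |V(G)| + p(t-1) - p(t-1) = |V(G)|,
\]
and the resolution is $|V(G)|$-linear, so $\reg = |V(G)|$. If $n$ is read as $|V(G)|$, this gives the stated value $\reg(I_{\Delta^\vee}) = n$. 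As a consistency check, Eagon--Reiner gives the degree as $|V| - \dim\Delta - 1$, which agrees with the count above.

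The main subtlety is bookkeeping rather than conceptual. The number $n$ in the statement must refer to $|V(G)|$, not to the number of vertices of the whiskered graph. Also, Eagon--Reiner needs Cohen--Macaulayness over the field in question. Both points are covered: shellability is characteristic-free, so Corollary~\ref{CM} holds over any $\K$, and the vertex count $|V(G(\Pi,t))| = n + p(t-1)$ comes directly from Construction~\ref{CN}.

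One further point should be noted. If some $W_i$ is empty, its whisker block still contributes $t-1$ vertices and $t-1$ to every facet. The subtraction therefore still yields $n$.
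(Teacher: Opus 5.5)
Your proposal is correct and is essentially the paper's proof: Cohen--Macaulayness from Corollary~\ref{CM} together with the Eagon--Reiner theorem gives the linear resolution, and purity with all facets of size $p(t-1)$ (Theorem~\ref{pure}) together with $|V(G(\Pi,t))| = n + p(t-1)$ forces every minimal generator $x_{V\setminus F}$ to have degree $n$. Your extra remarks are accurate and make explicit what the paper leaves implicit: that the generators correspond to complements of facets, that Cohen--Macaulayness holds over every field via shellability, and that empty blocks $W_i$ do not affect the count.
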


\begin{proof}
By Corollary~\ref{CM} and \cite[Theorem 3]{eagon}, the ideal $I_{\cf_t(G(\Pi, t))^\vee}$ has a linear resolution. 
For the regularity, Theorem~\ref{pure} gives $\dim(\cf_t(G(\Pi,t))) = p(t-1) - 1$ while $|V(G(\Pi,t))| = n + p(t-1)$. Thus all minimal generators of $I_{\cf_t(G(\Pi, t))^\vee}$ have degree $n$, and consequently $\reg\left(I_{\cf_t(G(\Pi, t))^\vee}\right) = n$.
\end{proof}

\section{Linear Resolutions of Higher-Order Clique Ideals}\label{algebra}

In this section, we prove that for any chordal graph $G$, the edge ideal $I(\overline{\mathcal{CH}_t(G)})$ of the complement $t$-clique clutter admits a $t$-linear resolution over all fields. This extends Fr\"oberg's classic theorem from edge ideals to higher-degree generated ideals associated with $t$-clique structures.

\begin{definition}
Let $\mathcal{C}$ be a $t$-uniform clutter.
\begin{enumerate}
    \item The \emph{closed neighborhood} of a $(t-1)$-subset $e \subset V(\mathcal{C})$ is:
    \[
    N_{\mathcal{C}}[e] = e \cup \{v \in V(\mathcal{C}) \mid e \cup \{v\} \in E(\mathcal{C})\}.
    \]
    
    \item A $(t-1)$-subset $e$ is a \emph{maximal subedge} if $N_{\mathcal{C}}[e] \neq e$ (equivalently, if $e$ is properly contained in some edge of $\mathcal{C}$).
    
    \item A maximal subedge $e$ is \emph{simplicial} if its closed neighborhood $N_{\mathcal{C}}[e]$ forms a clique in $\mathcal{C}$.
    
    \item The \emph{deletion} $\mathcal{C} \setminus e$ of a subset $e \subseteq V(\mathcal{C})$ is the clutter with edge set:
    $\{F \in E(\mathcal{C}) \mid e \not\subseteq F\}.$
    
    \item A clutter $\mathcal{C}$ is \emph{chordal} if either:
    \begin{itemize}
        \item $\mathcal{C}$ has no edges, or
        \item There exists a simplicial maximal subedge $e$ such that $\mathcal{C} \setminus e$ is chordal.
    \end{itemize}
\end{enumerate}
\end{definition}

We now prove the main result of this section.
\begin{theorem}\label{linear-rs}
Let $G$ be a chordal graph. Then for every integer $t\ge2$, the edge ideal
$I\bigl(\overline{\mathcal{CH}_t(G)}\bigr)$
has a $t$-linear resolution over any field.
\end{theorem}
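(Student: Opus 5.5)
The plan is to use the definition of chordal clutter just introduced, together with the known generalization of Fröberg's theorem for $t$-uniform clutters. That result (Bigdeli--Yazdan Pour--Zaare-Nahandi) says: if $\mathcal{C}$ is a chordal $t$-uniform clutter in the sense of the preceding definition, then $I(\overline{\mathcal{C}})$ has a $t$-linear resolution over every field. With this in hand, it suffices to show that $\mathcal{C}:=\CH_t(G)$ is chordal whenever $G$ is a chordal graph. We prove this by induction on the number of edges of $\mathcal{C}$, i.e.\ the number of $t$-cliques of $G$; if there are none, $\mathcal{C}$ is chordal by definition.

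For the inductive step we must produce a simplicial maximal subedge $e$ with $\mathcal{C}\setminus e$ again of the same type. First, pick a simplicial vertex $x$ of $G$ lying in some $t$-clique. Such a vertex exists: restrict to the induced chordal subgraph $G'$ spanned by the vertices lying in $t$-cliques, which is nonempty. A simplicial vertex $x$ of $G'$ need not be simplicial in $G$, but its neighbours outside $G'$ never lie in $t$-cliques, so the $t$-cliques through $x$ are governed only by $N_{G'}[x]$.

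It is therefore cleaner to run the induction over a larger class: clutters of the form ``$t$-cliques of a chordal graph, minus those $t$-cliques containing a deleted $(t-1)$-set''. Let $N_{G'}[x]=\{x,x_2,\dots,x_k\}$, a clique with $k\ge t$. Take $e=\{x\}\cup X'$ with $X'\subseteq\{x_2,\dots,x_k\}$ and $|X'|=t-2$. Then $N_{\mathcal{C}}[e]$ is contained in $N_{G'}[x]$, and every $t$-subset of the clique $N_{G'}[x]$ is an edge of $\mathcal{C}$, so $N_{\mathcal{C}}[e]$ is a clique in $\mathcal{C}$. Also $e$ is a maximal subedge since $k\ge t$.

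Now delete the subedges $\{x\}\cup X'$ one at a time, in a fixed order. Each deletion removes the $t$-cliques $\{x\}\cup X'\cup\{y\}$ that are still present. The neighbourhood of a later subedge of the same form only shrinks, and the remaining edges through $x$ still lie inside the clique $N_{G'}[x]$. So we must check that each later $e$ is still simplicial: its neighbourhood is a subset of $N_{G'}[x]$, and we need every $t$-subset of $N_{\mathcal{C}}[e]$ to still be an edge.

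This is the main obstacle. To handle it, order the subsets lexicographically, or instead delete $e$ only while its current neighbourhood still consists of surviving edges. Once all edges through $x$ are gone, the remaining clutter is $\CH_t(G\setminus x)$ on the chordal graph $G\setminus x$, which has fewer $t$-cliques, and induction applies.

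If the bookkeeping in the deletion order becomes messy, the fallback is to verify $t$-linearity directly via the Alexander dual. By Eagon--Reiner, $I(\overline{\CH_t(G)})$ has a linear resolution iff the complex whose faces are the sets containing no non-clique $t$-set is Cohen--Macaulay. That complex has as faces exactly the subsets $F$ with every $t$-subset of $F$ a clique, i.e.\ its $(t-1)$-skeleton agrees with the clique complex of $G$ and its higher faces are cliques. It is then the clique complex of $G$ together with the full $(t-2)$-skeleton. The clique complex of a chordal graph is a union of simplices glued along simplices and so is homotopy-trivial on each component, as are its links. Combined with the full $(t-2)$-skeleton, Reisner's criterion should then give the required vanishing of homology.
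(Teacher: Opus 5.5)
Your overall route is the paper's: reduce via the Bigdeli--Yazdan Pour--Zaare-Nahandi criterion to chordality of $\CH_t(G)$, take a simplicial vertex $x$ whose closed neighbourhood $\{x,x_2,\dots,x_k\}$ is a clique with $k\ge t$, and delete the $(t-1)$-sets through $x$ one at a time until only $\CH_t(G\setminus x)$ is left. But the step you call ``the main obstacle'' is the whole content of the inductive step, and you leave it unresolved. Your option of deleting $e$ ``only while its current neighbourhood still consists of surviving edges'' is circular. What has to be shown is exactly that such an $e$ exists at every stage until no edge through $x$ remains.

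Lexicographic order is what the paper uses, and it does work, but it needs a check your sketch omits. Identify a $(t-2)$-subset of $\{x_2,\dots,x_k\}$ with its index set $A\subseteq\{2,\dots,k\}$, and write $e_A=\{x\}\cup\{x_a:a\in A\}$. For $a\in A$ and $y\notin A$, the set $(A\setminus\{a\})\cup\{y\}$ is lexicographically later than $A$ iff $y>a$. Hence, once all lexicographically earlier $e_{A'}$ have been deleted, the surviving neighbourhood of $e_A$ is exactly $e_A\cup\{x_s: s>\max A\}$. Every $(t-2)$-subset of $A\cup\{s: s>\max A\}$ is lexicographically at least $A$, so every $t$-subset of this neighbourhood containing $x$ survives. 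The $t$-subsets avoiding $x$ are $t$-cliques of $G$ that are never touched. This gives simpliciality.

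The same computation shows $e_A$ is a \emph{maximal} subedge only when $\max A<k$. For $A\ni k$ the neighbourhood is already empty when its turn comes. So ``delete all $\{x\}\cup X'$'' includes steps not permitted by the definition of a chordal clutter. You must restrict to $A\subseteq\{2,\dots,k-1\}$, as the paper does with $\{x_1,\dots,x_{d-1}\}$. You then need to note that every $t$-clique through $x$ still contains such an $e_A$: drop $x_k$ if it is present, otherwise drop any vertex other than $x$.

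Your fallback does not repair this, because it misquotes Eagon--Reiner. That result says $I_\Delta$ has a linear resolution iff the Alexander dual $\Delta^\vee$ is Cohen--Macaulay, not $\Delta$ itself. Here $\Delta$ is the clique complex of $G$ together with the full $(t-2)$-skeleton, and it is usually not Cohen--Macaulay. For $t=2$ and $G$ the triangle $abc$ with pendant edge $cd$, $\Delta$ is not even pure, yet $I(\overline{G})=(ad,bd)$ has a linear resolution. So Reisner's criterion on links is the wrong test.

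The correct tool is Hochster's formula: $t$-linearity is equivalent to $\tilde H_i(\Delta_W)=0$ for all $W\subseteq V(G)$ and all $i\neq t-2$. Write $\Delta_W$ as the union of the $(t-2)$-skeleton of the simplex on $W$ and the clique complex of the chordal graph $G[W]$, whose components are contractible. Their intersection is the $(t-2)$-skeleton of the clique complex, and a Mayer--Vietoris argument then gives the vanishing. That would be a genuinely different, topological proof, but it is not what you wrote.
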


\begin{proof}
By \cite[Theorem~3.3]{BYZ17}, it suffices to show that the clutter $\mathcal{CH}_t(G)$ is chordal for all $t\ge2$. We prove this by induction on the number of vertices of $G$.
If $|V(G)| = 2$, then $G$ has at most one edge. Consequently, $\mathcal{CH}_t(G)$ has at most one edge (and is empty whenever $t>2$), so it is trivially chordal.
Now suppose $|V(G)| \ge 3$, and assume the claim holds for all chordal graphs with fewer vertices. Let $x_1$ be a simplicial vertex of $G$, and set $G' = G\setminus\{x_1\}$.
If $\deg_G(x_1)<t-1$, then no $t$-clique in $G$ contains $x_1$. Hence $\mathcal{CH}_t(G) = \mathcal{CH}_t(G')$, and the result follows from the induction hypothesis.

If $\deg_G(x_1) = d-1 \geq t-1$, let $N_G(x_1) = \{x_2, \dots, x_d\}$. Define the family of $(t-1)$-sets:
\[
\mathcal{X} = \big\{ e \subseteq \{x_1, \dots, x_{d-1}\} \mid |e| = t-1 \text{ and } x_1 \in e \big\}.
\]
Then $|\mathcal{X}| = \binom{d-2}{t-2}$ since we must choose $t-2$ additional vertices from the $d-2$ neighbors of $x_1$ (excluding $x_1$ itself).

Each $e \in \mathcal{X}$ has a unique representation $e = \{x_1, x_{i_2}, \dots, x_{i_{t-1}}\}$ where $2 \leq i_2 < \cdots < i_{t-1} \leq d-1$. The lexicographic order on $\mathcal{X}$ is defined as follows: for $e = \{x_1, x_{i_2}, \dots, x_{i_{t-1}}\}$ and $e' = \{x_1, x_{j_2}, \dots, x_{j_{t-1}}\}$,
\[
e < e' \iff \exists \ell \in \{2, \dots, t-1\} \text{ such that } i_\ell < j_\ell \text{ and } i_k = j_k \text{ for all } k < \ell.
\]

For each $1 \leq i \leq \binom{d-2}{t-2}$, define the clutter
$\Omega_i(G) = \mathcal{CH}_t(G) \setminus e_1 \setminus \cdots \setminus e_{i-1}.$
We make the following claims:
\begin{enumerate}
    \item $e_i$ is a maximal simplicial $(t-1)$-subedge in $\Omega_i(G)$ for all $i$;
    \item $\Omega_m(G) = \mathcal{CH}_t(G\setminus x_1)$ where $m = \binom{d-2}{t-2}$.
\end{enumerate}
These claims together imply that $\mathcal{CH}_t(G)$ is chordal by induction on $|V(G)|$.

Consider the closed neighborhood $N_{\Omega_i(G)}[e_i] = e_i \cup S$ where $S \subseteq \{x_2,\ldots,x_d\}$. 
Since $N_G[x_1]$ forms a clique, $e_i \cup \{x_d\}$ is a edge of $\CH_t(G)$ and none of $e_j$ (where $j<i$) is in $e_i \cup \{x_d\}$, which implies $e_i \cup \{x_d\}$ is also an edge of $\Omega_i(G)$. This implies $x_d \in S$, and consequently $N_{\Omega_i(G)}[e_i] \supsetneq e_i$. Therefore, $e_i$ is a maximal subedge of $\Omega_i(G)$ for all $1 \leq i \leq m$.

We show that $e_i$ is simplicial in $\Omega_i(G)$ for all $1 \leq i \leq m$. The closed neighborhood $N_{\Omega_i(G)}[e_i]$ consists of $e_i$ together with a subset $S \subseteq \{x_2, \ldots, x_d\}$. Let $e_i$ and $e_{i-1}$ share their first $k-1$ entries, with their $k$-th entries being $x_{k_i}$ and $x_{k_{i-1}}$, respectively, where $k_i > k_{i-1}$.

First, suppose for contradiction that some $x_s \in S$ satisfies $s < k_i$. Then $e_i \cup \{x_s\}$ forms a $t$-clique, which must contain a subedge $e_r = (e_i \cup \{x_s\}) \setminus \{x_\ell\}$ for some $\ell$. Since $s < k_i$, the lexicographic order ensures $e_r < e_i$. But $\Omega_i(G)$ excludes all such $e_r$ (as they appear in $\{e_1, \ldots, e_{i-1}\}$), contradicting the assumption that $e_i \cup \{x_s\}$ is present in $\Omega_i(G)$. Thus, $S$ can only contain vertices $x_s$ with $s > k_i$.

Now consider any $t$-clique $C$ containing $x_1$ in $e_i \cup S$. Because $G[\{x_1, \ldots, x_d\}]$ is complete, $C$ is formed by extending $e_i$ with vertices from $S$, all of which have indices larger than $k_i$. By the lexicographic ordering, no such $C$ can contain any earlier $e_j$ (where $j < i$) as a subset. This means $N_{\Omega_i(G)}[e_i]$ induces a clique in $\Omega_i(G)$, and therefore $e_i$ is simplicial.

We now prove that $\Omega_m(G) = \mathcal{CH}_t(G\setminus x_1)$. Consider any $t$-clique $C$ of $G$ containing $x_1$. There are two cases to examine:
First, if $x_d \notin C$, then by construction there exists some $e_i$ with $1 \leq i \leq m$ such that $e_i \subseteq C$. In the case where $x_d \in C$, the remaining $t-1$ vertices of $C$ must form one of the $e_i$, again yielding $e_i \subseteq C$. This establishes the equality $\Omega_m(G) = \mathcal{CH}_t(G\setminus x_1)$.
\end{proof}

The following example demonstrates that the converse of Theorem~\ref{linear-rs} does not hold.

\begin{example}
    Consider the graph $G$ with vertex set $V(G) = \{x_1, x_2, x_3, x_4, x_5, x_6\}$ and edge set
    $E(G) = \big\{\{x_1, x_2\}, \{x_2, x_3\}, \{x_3, x_4\}, \{x_4, x_1\}, \{x_1, x_5\}, \{x_1, x_6\}, \{x_5, x_6\}\big\}.$
    Observe that $G$ is \emph{not} chordal, as it contains the cycle $x_1, x_2, x_3, x_4, x_1$ without a chord.
However, the edge set of the complement of the $3$-clique complex $\overline{\CH_3(G)}$ satisfies
    $E(\overline{\CH_3(G)}) = E(\CH_3(K_6) \setminus \{x_1, x_5, x_6\}),$
    where $K_6$ denotes the complete graph on six vertices.
Despite $G$ not being chordal, the ideal $I(\overline{\CH_3(G)})$ admits a $3$-linear resolution.
\end{example}

 \vspace*{2mm}
\noindent
\textbf{Acknowledgments.}
We thank P.~Deshpande and A.~Singh for organizing the NCM workshop
"Cohen-Macaulay Simplicial Complexes in Graph Theory" (2023) at CMI, which
introduced us to this area and led to several valuable discussions. Both authors
acknowledge support from the Science and Engineering Research Board (SERB), and
the second author additionally acknowledges support from the National Board for
Higher Mathematics (NBHM).

\vspace*{2mm}
\noindent
\textbf{Data availability statement.} Data sharing is not applicable to this article as no datasets were generated or analyzed during the current study.

\bibliographystyle{abbrv}
\bibliography{refs} 
\end{document}